\newcommand{\Hilb}{\mathrm{Hilb}}
\newcommand{\img}{\mathrm{Im}\, }
\newcommand{\projectedorange}{\mathcal{C}} 
\newcommand{\orangeO}{\mathcal{O}}
\newcommand{\RR}{\mathbb{R}}
\newcommand{\R}{\mathbb{R}}
\newcommand{\Z}{\mathbb{Z}}
\newcommand{\bb}{Bernstein-B\'ezier}
\renewcommand\geq\geqslant
\renewcommand\leq\leqslant 
\newtheorem{theorem}{Theorem}[section]
\newtheorem{corollary}[theorem]{Corollary}
\newtheorem{lemma}[theorem]{Lemma}
\newtheorem{proposition}[theorem]{Proposition}
\theoremstyle{definition}
\newtheorem{definition}{Definition}[section]
\def\ps@pprintTitle{%
	\let\@oddhead\@empty
	\let\@evenhead\@empty
	\def\@oddfoot{}%
	\let\@evenfoot\@oddfoot
}
\begin{document}
	
	\begin{frontmatter}
		
		
		
		\title{Multivariate polynomial splines on  generalized oranges}
		
		
		\author[ohio]{Maritza Sirvent}\ead{sirventleon.1@osu.edu}
		\author[towson]{Tatyana Sorokina}\ead{tsorokina@towson.edu}
		\author[swansea]{Nelly Villamizar}\ead{n.y.villamizar@swansea.ac.uk}
		\author[swansea]{Beihui Yuan}\ead{beihui.yuan@swansea.ac.uk}
		
		\affiliation[ohio]{organization={Department of Mathematics, The Ohio State University},
			addressline={231 West 18th Avenue}, 
			city={Columbus},
			postcode={OH 43210}, 
			state={Ohio},
			country={United States}}
		
		\affiliation[towson]{organization={Department of Mathematics, Towson University},
			addressline={8000 York Road}, 
			city={Towson},
			postcode={MD 21252}, 
			state={Maryland},
			country={United States}} 
		
		\affiliation[swansea]{organization={Department of Mathematics, Swansea University},
			addressline={Fabian Way}, 
			city={Swansea},
			postcode={SA1 8EN}, 
			country={United Kingdom}} 	  
		
		\begin{abstract}
			We consider spaces of multivariate splines defined on a particular type of simplicial partitions that we call {\it (generalized) oranges}. Such partitions are composed of a finite number of maximal faces with exactly one shared {\it medial} face. We reduce the problem of finding the  dimension of splines on oranges to computing dimensions of splines on simpler, lower-dimensional partitions that we call {\it projected oranges}. We use both algebraic and \bb~tools. 
			
		\end{abstract}
		
		
		
		\begin{keyword}
			
			
			Multivariate spline functions \sep dimension of spline spaces \sep Bernstein-Bézier methods, cofactor criterion.
			\MSC[2020] 41A15 \sep 13D40 \sep 	65D07.
		\end{keyword}
		
	\end{frontmatter}
	
	\section{Introduction and preliminaries} \label{introduction and Preliminaries}
	
	Let $\Delta$ be a simplicial partition of an underlying domain $\Omega$ in $\RR^k$ formed as a geometric realization of a $k$-dimensional simplicial complex. For integers $0\leq r\leq d$, the space of splines $S^r_d(\Delta)$ is defined as the set of $C^r$-smooth piecewise polynomial functions of degree at most~$d$ on the partition $\Delta$.
	The space $S^r_d(\Delta)$ is a real vector space. These spline spaces are used throughout numerical analysis and approximation theory to solve diverse problems such as the interpolation and approximation of data, design of curves and surfaces, and, in the finite element method, for the solution of differential equations, among others. 
	A fundamental question is 
	to determine the dimension of such spaces. The problem has proven to be difficult due to the dependency of the dimension on the specific geometry of $\Delta$.
	
	In 1973, Strang~\cite{Strang-1973}, published a famous conjecture on the generic dimension of $S^1_d(\Delta)$ for the bivariate case. 
	A simplicial partition $\Delta$ is said to be generic provided that for all sufficiently small perturbations of the location of the vertices of $\Delta$, the resulting decomposition $\tilde{\Delta}$ satisfies $\dim S^r_d(\tilde{\Delta})=\dim S^r_d(\Delta)$, see~\cite{ASW93} where this definition was explicitly coined. Although the term generic dimension was not explicitly mentioned by Strang, he was aware of the fact that the dimension might increase for some particular configurations.
	In \cite{AS87}, Alfeld and Schumaker prove a dimension formula for $S^r_d(\Delta)$, when the polynomial degree $d\geq 4r+1$, and $\Delta$ is a general triangulation. This result was later extended to $d\geq 3r+2$ by Hong in~\cite{Hong-1991}, and to $d\geq 3r+1$ for generic triangulations by Alfeld and Schumaker in \cite{Alfeld-Schumaker-1990}. For $r<d<3r+2$, the only dimension known explicitly is the case when $r=1$ and $d=4$, see \cite{APS87}.
	
	In 1988, Billera, see~\cite{Billera-1988}, pioneered the use of algebraic homological methods to study multivariate splines, and proved the conjecture given by Strang.
	
	For the trivariate case, Alfeld, Schumaker, and Whiteley, see~\cite{ASW93}, use the \bb~analysis and algebraic homology and rigidity theory to give an explicit expression for the generic dimension of $S^1_d(\Delta)$, for $d\geq 8$.
	
	For the spatial dimension $k\geq 3$, no explicit dimension formula is known for general triangulations. There are results on upper and lower bounds and exact dimensions for some specific triangulations. For example, in \cite{DV20a}, DiPasquale and Villamizar proved a lower bound for the dimension for splines on tetrahedral stars of a vertex. 
	A natural generalization of a star of a vertex is a star of a simplex.
	A star of a simplex $\tau\in\Delta$ is the set of simplices in $\Delta$ that contain $\tau$. If a simplicial complex $\Delta$ equals the closure of the star of $\tau$, we just call $\Delta$ a star of $\tau$. 
	In this paper, we study stars of a simplex of arbitrary dimension. We call such stars \emph{generalized oranges} or \emph{oranges}.
	
	Recall that a $k$-dimensional simplicial complex is \emph{pure} is all its maximal faces are of dimension $k$.
	\begin{definition}[Generalized orange]\label{def:orange}
		For integers $0\leq i\leq k$, a \emph{$(k,i)$-orange} is a pure $k$-dimensional simplicial complex $\mathcal{O}$ composed by $n$ maximal faces and with exactly one
		face $\tau$ of dimension $k-i$ such that every maximal face of $\mathcal{O}$ contains $\tau$. 
		We say that $\tau$ is a \emph{medial simplex}.
	\end{definition}
	Note that the two extremal cases are $i=0$ and $i=k$. For $i=0$, $\Delta=\tau$ is a simplex. For $i=k$, the medial face $\tau$ is a vertex, and $\Delta$ is a star of a vertex. 
	Examples of $(3,2), (2,1)$, and $(3,1)$-oranges are shown in
	Figs.~\ref{fig:fan}, \ref{twotriangles} and \ref{twotetra}, respectively. 
	\begin{figure}
		\centering
		\includegraphics[scale=1.1]{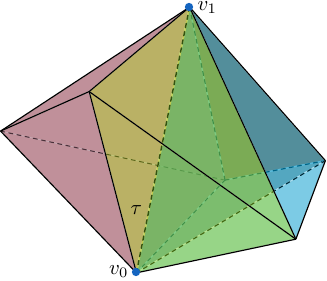}
		\caption{Example of a $(3,2)$-orange, it is a 3-dimensional simplicial complex with medial simplex $\tau$, which is the $1$-face common to all tetrahedra.}
		\label{fig:fan}
	\end{figure}
	
	Historically, the term ``orange" was introduced in~\cite{ASS92} to describe a tetrahedral partition with exactly one interior edge which is common to all tetrahedra in the partition, see Fig.~\ref{fig:fan}. 
	This matches our definition of a $(3,2)$-orange except that the medial edge does not have to be interior. The dimension for the spline spaces on a $(3,2)$-orange with interior medial edge was found in \cite{ASS92} using the trivariate cofactor method, see~\cite{LS07} for the proof as well. In~\cite{maritza}, it was noted that the dimension for the spline spaces on a $(3,2)$-orange with interior medial edge can be computed  by adding up the dimensions of bivariate splines on a 
	planar vertex star obtained by projecting the orange along the common interior edge.
	
	In this article we generalize the idea of the projection to an $i$-dimensional star of a vertex, and relate the dimension of the spline spaces over a $(k,i)$-orange $\orangeO$ to those over a projected orange $\projectedorange$, see Section \ref{section:Geometry} for the exact definition, by proving the following result.
	
	\begin{theorem}\label{theor:sumdim}
		Let $\projectedorange\subseteq \RR^i$ be the projected orange of a $(k,i)$-orange $\orangeO\subseteq \RR^k$. Then
		\begin{equation}\label{eq:main_result_formula}
			\dim S^{r}_{d}(\orangeO)=\sum_{j=0}^d\binom{d+k-i-j-1}{k-i-1}\dim S^{r}_i(\projectedorange).
		\end{equation}
	\end{theorem}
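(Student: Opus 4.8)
I will use the fibered structure of an orange over its projection. Throughout, write $\eqref{eq:main_result_formula}$ as the target.

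\medskip

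\noindent\textbf{Proof proposal.} The plan is to choose affine coordinates $(y,z)=(y_1,\dots,y_{k-i},z_1,\dots,z_i)$ on $\RR^k$ adapted to the medial simplex $\tau$: namely, so that the affine hull of $\tau$ is the coordinate flat $\{z=0\}$ and so that the projection used in Section~\ref{section:Geometry} to construct $\projectedorange$ is precisely $\pi\colon(y,z)\mapsto z$. First I would record the geometric consequences that Section~\ref{section:Geometry} provides in these coordinates: the assignment $\sigma\mapsto\pi(\sigma)$ is a bijection from the maximal faces of $\orangeO$ to those of $\projectedorange$ which matches codimension-one adjacency with codimension-one adjacency; and every interior codimension-one face $F$ of $\orangeO$ contains $\tau$, so its affine hull contains $\{z=0\}$, whence the linear form $\ell_F$ cutting out $\operatorname{aff}(F)$ involves the variables $z$ only, and $\ell_F$ — regarded as a form on $\RR^i$ — is exactly the linear form that cuts out $\pi(F)$ in $\projectedorange$.

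Next I would invoke the standard description of spline spaces on a simplicial complex: an element of $S^r_d(\orangeO)$ is the same as a family $(p_\sigma)_\sigma$ of polynomials of total degree $\leq d$, one on each maximal face $\sigma$, such that $p_\sigma-p_{\sigma'}\in\langle\ell_F^{\,r+1}\rangle$ whenever $\sigma,\sigma'$ share an interior codimension-one face $F$, and similarly for $S^r_j(\projectedorange)$. Now expand each piece in powers of the fibre variables, $p_\sigma(y,z)=\sum_{\alpha\in\Z_{\geq 0}^{\,k-i}}y^{\alpha}\,p_{\sigma,\alpha}(z)$, where $p_{\sigma,\alpha}$ is a polynomial in $z$ of degree at most $d-|\alpha|$ (and $p_{\sigma,\alpha}=0$ once $|\alpha|>d$); this is a linear bijection, for each $\sigma$, between polynomials of total degree $\leq d$ and such families of $z$-polynomials. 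Since each $\ell_F^{\,r+1}$ is a polynomial in $z$ alone, the congruence $p_\sigma-p_{\sigma'}\in\langle\ell_F^{\,r+1}\rangle$ holds if and only if $p_{\sigma,\alpha}-p_{\sigma',\alpha}\in\langle\ell_F^{\,r+1}\rangle$ for every $\alpha$ separately. Using the bijection $\sigma\leftrightarrow\pi(\sigma)$ and the identification of the forms $\ell_F$ from the first step, this says precisely that, for each fixed $\alpha$, the family $(p_{\sigma,\alpha})_\sigma$ is an element of $S^r_{d-|\alpha|}(\projectedorange)$.

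Assembling these observations gives a linear isomorphism
\[
S^r_d(\orangeO)\;\xrightarrow{\ \sim\ }\;\bigoplus_{\alpha\in\Z_{\geq 0}^{\,k-i},\ |\alpha|\leq d} S^r_{d-|\alpha|}(\projectedorange),\qquad s\longmapsto\bigl((p_{\sigma,\alpha})_\sigma\bigr)_\alpha,
\]
whose inverse reassembles each $p_\sigma$ from its $y$-graded pieces (the total degree bound $\leq d$ being automatic). Taking dimensions,
\[
\dim S^r_d(\orangeO)=\sum_{|\alpha|\leq d}\dim S^r_{d-|\alpha|}(\projectedorange)=\sum_{j=0}^{d}\#\{\alpha\in\Z_{\geq 0}^{\,k-i}:|\alpha|=j\}\cdot\dim S^r_{d-j}(\projectedorange),
\]
and since $\#\{\alpha\in\Z_{\geq 0}^{\,k-i}:|\alpha|=j\}=\binom{j+k-i-1}{k-i-1}$, re-indexing $j\mapsto d-j$ yields~\eqref{eq:main_result_formula}.

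The step that needs the most care is the first one: one must verify that the coordinates can be chosen so that all the smoothness forms $\ell_F$ genuinely involve the $z$-variables only and, moreover, coincide on the nose with the forms defining $\projectedorange$ — this is exactly where the precise construction of the projected orange in Section~\ref{section:Geometry} enters, and it is what makes the term-by-term decoupling of the smoothness congruences legitimate rather than merely formal. Once that is in place, the remainder is bookkeeping: the expansion in powers of $y$, the degree-wise splitting, and the count of monomials of each degree in $k-i$ variables.
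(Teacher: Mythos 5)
Your proposal is correct and follows essentially the same route as the paper's algebraic proof: the observation that every shared codimension-one face contains $\tau$ so its defining linear form lives in the base variables, the bijection of maximal faces with the projection, and the expansion of each piece in powers of the fibre variables to decouple the smoothness congruences are exactly the content of Proposition~\ref{Prop:tensor_product_structure_on_oranges}. The only difference is cosmetic: the paper packages your direct-sum decomposition as a filtered tensor product $S^{r}(\projectedorange)\otimes_{\RR}\RR[x_{i+1},\dots,x_{k}]$ and extracts the dimension count via the Hilbert series identity of Corollary~\ref{Prop:Prop_Hilbert_series}, whereas you count monomials in $k-i$ variables directly.
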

	\noindent For the two extremal cases, $i=0$ and $i=k$, Theorem \ref{theor:sumdim} trivially holds. In the case $i=0$, the orange $\orangeO=\tau$ is a simplex, $\projectedorange$ is a point, and 
	\[\dim S^{r}_{d}(\orangeO)=\sum_{j=0}^{d}\binom{d+k-j-1}{k-1}=\binom{d+k}{k}.\] For $i=k$, we know that $\orangeO=\projectedorange$. In this case, formula~\eqref{eq:main_result_formula} is just the trivial identity $\dim S^{r}_{d}(\orangeO)=\dim S^{r}_d(\projectedorange)$.
	
	The paper is organized as follows. In Section~\ref{section:Geometry}, we construct a projection $\projectedorange$ of a $(k,i)$-orange $\orangeO$ along the medial face. We prove that the projection $\projectedorange$ is also a simplicial complex. In Section~\ref{section:cofactor_on_oranges}, we analyze the relation between the space of splines on $\orangeO$ and that on $\projectedorange$ by using the so-called {\it cofactor criterion}. Finally, we prove our main result, Theorem~\ref{theor:sumdim}, by two methods: the algebraic method in Section~\ref{sec:HilbertSeries}, and the \bb~method in Section \ref{sec:BB}. The algebraic method is based on Hilbert series of spline spaces viewed as filtered vector spaces, while the \bb~method uses minimal determining sets and smoothness conditions in \bb~form.
	
	\section{Geometry}\label{section:Geometry}
	We embed a $(k, i)$-orange $\orangeO$ in $\RR^k$, and we assume that the medial face $\tau$ in $\orangeO$ of dimension $(k-i)$ is in the $(k-i)$-dimensional coordinate subspace of $\RR^{k}$ defined by $x_{1}=\cdots=x_{i}=0$. 
	As a simplifying assumption, we identify $\orangeO$ with its embedding in $\RR^k$. 
	For this choice of coordinates, we define the following projection:
	\begin{equation}
		\begin{split}\label{eq:projection}
			\pi \colon \RR^{k}&\to\RR^{i},\\
			(x_{1},\dots,x_{k})&\mapsto (x_{1},\dots,x_{i}).
		\end{split}
	\end{equation}
	Additionally, we write $\projectedorange$ for the collection of sets $\pi(\sigma)\subseteq \RR^i$ of images under the projection $\pi$ of the faces $\sigma\in\orangeO$. Namely, 
	\begin{equation}\label{eq:projOrange}
		{\projectedorange}:=\pi(\orangeO)=\bigl\{\pi(\sigma)\colon \sigma\in\orangeO\bigr\}.
	\end{equation}
	We shall refer to ${\mathcal C}$ as a \emph{projected orange}. Note that the projection of the medial face $\pi(\tau)$ is the origin $O$ of $\RR^i$. 
	For any $X,Y\subseteq \RR^k$, denote by $X*Y$ the \emph{join} of $X$ and $Y$ which is the union of all line segments joining the points in $X$ to the points in $Y$. Note that 
	\begin{equation}\label{convex}
		\pi(X*Y)=\pi(X)*\pi(Y).
	\end{equation}
	If $X=\{v\}$ and $Y=\{w\}$, we simply write $v*w$ instead of $X*Y$.
	Recall that the set of points $\{p_{1},\dots,p_{j}\}\subseteq\RR^{k}$ is said to be in \emph{general position} if its span is a $(j-1)$-subspace of $\RR^{k}$. Note that every $m$-simplex $\sigma\in\orangeO$ can be written as the join of $m+1$ points in general position.
	
	\begin{proposition}\label{prop:ProjectionIsSimplicial}
		If $\orangeO$ is a $(k,i)$-orange, then the projected orange $\projectedorange$ defined in \eqref{eq:projOrange} is a simplicial complex. Moreover, $\projectedorange$ is a star of a vertex. 
	\end{proposition}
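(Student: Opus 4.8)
The plan is to exhibit $\projectedorange$ explicitly as a cone over the link of the medial simplex, and then to check the simplicial-complex axioms by transporting the face-intersection property through the projection. Since every maximal face of $\orangeO$ contains $\tau$, I would write each such face as $\Sigma=\tau*\rho$, where $\rho$ is the $(i-1)$-simplex on the vertices of $\Sigma$ not lying on $\tau$; equivalently $\orangeO=\overline{\tau}*L$, with $\overline{\tau}$ the closed simplex on the vertices of $\tau$ and $L=\mathrm{lk}_{\orangeO}(\tau)$ the pure $(i-1)$-dimensional subcomplex whose maximal faces are the $\rho$'s. Because $\pi(\tau)=\{O\}$ and $\pi$ commutes with joins by~\eqref{convex}, this gives $\projectedorange=\pi(\overline{\tau}*L)=\{O\}*\pi(L)$, the cone from $O$ over $\pi(L)$. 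So it suffices to show that each $\pi(\Sigma)$ is a genuine $i$-simplex with $O$ as a vertex, that these pairwise meet in common faces, and that, together with their faces, they exhaust $\projectedorange$; then $\projectedorange$ is a simplicial complex, and since every maximal face contains the vertex $O=\pi(\tau)$, it is the closed star of $O$, i.e.\ a star of a vertex.

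The key geometric input is that $\mathrm{aff}(\tau)$ equals the coordinate subspace $\{x_1=\cdots=x_i=0\}=\pi^{-1}(O)$, which holds because $\tau$ is a $(k-i)$-simplex contained in that $(k-i)$-dimensional subspace. I would use this to prove that, for $\Sigma=\tau*\rho$ with $V(\rho)=\{w_1,\dots,w_i\}$, the $i+1$ points $O,\pi(w_1),\dots,\pi(w_i)$ are affinely independent in $\RR^i$: otherwise they would lie in an affine hyperplane $H\ni O$, and then $\pi^{-1}(H)$ would be a hyperplane of $\RR^k$ containing $\pi^{-1}(O)=\mathrm{aff}(\tau)$ together with all the $w_j$, hence all $k+1$ vertices of $\Sigma$, contradicting that $\Sigma$ is a $k$-simplex. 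Consequently $\pi$ restricts to an affine isomorphism $\overline{\rho}\to\pi(\rho)$ onto a genuine $(i-1)$-simplex, $O\notin\mathrm{aff}(\pi(\rho))$, and therefore $\pi(\Sigma)=O*\pi(\rho)$ is a genuine $i$-simplex with vertex $O$.

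The delicate point — and the main obstacle — is the face-intersection property: for maximal faces $\Sigma=\tau*\rho$ and $\Sigma'=\tau*\rho'$ one must show $\pi(\overline{\Sigma})\cap\pi(\overline{\Sigma'})=\pi(\overline{\Sigma\cap\Sigma'})$. The difficulty is that $\pi$ is far from injective on the solid simplex $\overline{\Sigma}$, so the face-intersection axiom in $\RR^k$ does not descend to $\RR^i$ automatically. I would resolve this by exploiting that $\ker\pi=\mathrm{dir}(\mathrm{aff}\,\tau)$ is a linear subspace contained in the direction space of \emph{every} maximal simplex, so $\pi$ is projection ``along the common part''. Fix a point $a$ in the relative interior of $\tau$ and pass to the cones $\widehat{\Sigma}:=a+\mathrm{pos}\{v-a:v\in V(\Sigma)\}$. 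Since $\orangeO$ is realized as a simplicial complex in $\RR^k$, one has $\widehat{\Sigma}\cap\widehat{\Sigma'}=\widehat{\Sigma\cap\Sigma'}$; each $\widehat{\Sigma}$ contains $\ker\pi$ because $a$ is interior to $\tau$; and $\pi(\widehat{\Sigma})=\mathrm{pos}\{\pi(w):w\in V(\rho)\}$ is a simplicial cone by the non-degeneracy just established. As each $\widehat{\Sigma}$ is $\ker\pi$-invariant, any point common to $\pi(\widehat{\Sigma})$ and $\pi(\widehat{\Sigma'})$ has preimages in $\widehat{\Sigma}$ and in $\widehat{\Sigma'}$ differing by an element of $\ker\pi$, hence a preimage in $\widehat{\Sigma}\cap\widehat{\Sigma'}$; therefore $\pi(\widehat{\Sigma})\cap\pi(\widehat{\Sigma'})=\pi(\widehat{\Sigma\cap\Sigma'})=\mathrm{pos}\{\pi(w):w\in V(\rho\cap\rho')\}$.

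Finally, since $\pi(\overline{\Sigma})=O*\pi(\overline{\rho})$ lies in the simplicial cone $\pi(\widehat{\Sigma})$, whose generators are linearly independent and hence give unique conic coordinates, intersecting the two solid simplices with the cone identity of the previous step forces $\pi(\overline{\Sigma})\cap\pi(\overline{\Sigma'})=O*\pi(\overline{\rho\cap\rho'})=\pi(\overline{\Sigma\cap\Sigma'})$. The remaining bookkeeping is routine: every element of $\projectedorange$ is a face of some $\pi(\Sigma)$ (again by~\eqref{convex}, writing a face of $\orangeO$ as a join of a face of $\tau$ with a face of some $\rho$); distinct maximal faces of $\orangeO$ yield distinct $i$-simplices $\pi(\Sigma)$ (using linear independence of the cone generators); and pairwise common-face intersections among the maximal simplices suffice for all the complex axioms. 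The ``star of a vertex'' conclusion then follows as indicated in the first paragraph.
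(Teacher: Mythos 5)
Your argument is correct in substance but follows a genuinely different route from the paper's. The paper avoids cones altogether: it first shows that images of simplices are simplices by a dimension count on spans (Lemma \ref{lemma:lemma1_for_prop1}), then proves injectivity of $\pi$ on the vertices off $\tau$ (Lemma \ref{lemma:lemma2_for_prop1}) by an explicit convex-combination trick --- rewriting $v_1-v_2$ as a signed combination of the vertices of $\tau$ and producing a point of $(v_1*\tau)\cap(v_2*\tau)$ outside $\tau$ --- and bootstraps this into the intersection identity $\pi(\sigma_1)\cap\pi(\sigma_2)=\pi(\sigma_1\cap\sigma_2)$ (Lemma \ref{lemma:lemma3_for_prop1} and Corollary \ref{cor:intersection_one_in_link}), with a case split on whether the faces contain $\tau$. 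You package all of this into one statement about tangent cones at a point $a$ in the relative interior of $\tau$; the descent step --- $\ker\pi$-invariance of the cones giving $\pi(\widehat{\Sigma})\cap\pi(\widehat{\Sigma'})=\pi(\widehat{\Sigma}\cap\widehat{\Sigma'})$, followed by unique conic coordinates to cut the identity down from cones to solid simplices --- is clean and correct, as is the affine-independence argument showing each $\pi(\Sigma)$ is a genuine $i$-simplex. Your approach buys uniformity (no case analysis on whether a face meets $\tau$) and isolates exactly where the geometry enters; the paper's buys the explicit preimage statements (Corollaries \ref{cor:unique_preimage_in_link}--\ref{cor:maximal_faces_1to1}) that are reused in Section \ref{section:cofactor_on_oranges}.

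The one step you assert rather than prove is $\widehat{\Sigma}\cap\widehat{\Sigma'}=\widehat{\Sigma\cap\Sigma'}$, and this is precisely where the hypothesis that $\orangeO$ is a geometric simplicial complex does its work: it is not a formal consequence of $\overline{\Sigma}\cap\overline{\Sigma'}=\overline{\Sigma\cap\Sigma'}$, since coning does not commute with intersection in general. It does hold here, and you should say why: the tangent cone of a polytope at one of its points coincides with the polytope in a neighbourhood of that point, so for small $\epsilon$ one has $\widehat{\Sigma}\cap B(a,\epsilon)=\overline{\Sigma}\cap B(a,\epsilon)$, and likewise for $\Sigma'$ and for $\Sigma\cap\Sigma'$; hence the two sides of your identity agree on $B(a,\epsilon)$, and two cones with common apex $a$ that agree near $a$ are equal. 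With that justification added, the proof is complete.
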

	Before we prove Proposition~\ref{prop:ProjectionIsSimplicial}, we need some preliminary lemmas that will help us verify that every element in $\projectedorange$ is a simplex, every face of a simplex in $\projectedorange$ is also in $\projectedorange$, and that the intersection of any two elements in $\projectedorange$ is a face of each of them. 
	
	\begin{lemma}\label{lemma:lemma1_for_prop1}
		Let $\orangeO$ be a $(k,i)$-orange with the medial simplex $\tau$. The image $\pi(\sigma)$ of a simplex $\sigma\in\orangeO$ is a simplex. Moreover, if $\sigma$ and $\tau$ do not intersect, then $\pi(\sigma)$ and $\sigma$ have the same dimension. Otherwise, $\dim(\pi(\sigma))= \dim(\sigma) -\dim(\sigma\cap\tau)$.
	\end{lemma}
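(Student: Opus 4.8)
The plan is to use the description of any simplex $\sigma \in \orangeO$ as a join of vertices in general position, split those vertices according to whether they lie on the medial face $\tau$, and track what the projection $\pi$ does to each group. Since $\orangeO$ is a $(k,i)$-orange, every maximal face contains $\tau$, so a general simplex $\sigma$ decomposes as $\sigma = (\sigma\cap\tau) * \rho$, where $\rho$ is the join of the vertices of $\sigma$ not on $\tau$ (with the convention that $\rho$ is empty if $\sigma\subseteq\tau$). Here I would carefully justify that $\sigma\cap\tau$ is itself a face of $\sigma$, using that $\orangeO$ is a simplicial complex, and that the vertex set of $\sigma$ is the disjoint union of the vertices of $\sigma\cap\tau$ and the vertices of $\rho$.

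\textbf{Key steps.} First I would recall that $\tau$ lies in the coordinate subspace $x_1=\cdots=x_i=0$, so $\pi(\tau)=\{O\}$ is a single point; hence $\pi$ collapses every vertex of $\sigma\cap\tau$ to the origin. Second, using \eqref{convex} I would write $\pi(\sigma)=\pi(\sigma\cap\tau)*\pi(\rho)=\{O\}*\pi(\rho)$. Third, I need the crucial claim that $\pi$ restricted to the vertices of $\rho$ is injective and, more strongly, that $\{O\}\cup\pi(\{\text{vertices of }\rho\})$ is in general position in $\RR^i$. This follows because the vertices of $\rho$ together with $\tau$ span an affine subspace of dimension $\dim(\sigma\cap\tau)+\#(\text{vertices of }\rho)$ transversally to $\tau$'s directions (those are exactly the directions killed by $\pi$), so projecting away the $\tau$-directions keeps the images of the $\rho$-vertices and the origin affinely independent. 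Once that is established, $\pi(\sigma)$ is the join of $\#(\text{vertices of }\rho)+1$ points in general position, hence a simplex of dimension $\#(\text{vertices of }\rho) = \dim\sigma - \dim(\sigma\cap\tau)$ when $\sigma\cap\tau\neq\emptyset$. When $\sigma$ and $\tau$ are disjoint, $\sigma\cap\tau=\emptyset$, no vertex of $\sigma$ is collapsed, and I would argue injectivity/general position on all $\dim\sigma+1$ vertices directly — giving $\dim\pi(\sigma)=\dim\sigma$. To keep the two cases uniform, one may adopt $\dim\emptyset=-1$, so that both formulas read $\dim\pi(\sigma)=\dim\sigma-\dim(\sigma\cap\tau)$ with the disjoint case giving $\dim\sigma-(-1)$; I would state this convention explicitly or simply present the disjoint case separately as in the statement.

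\textbf{Main obstacle.} The delicate point is the general-position claim for the images of the $\rho$-vertices together with the origin. A priori, two distinct vertices of $\orangeO$ lying off $\tau$ could project to the same point, or the projected points could become affinely dependent, which would make $\pi(\sigma)$ a lower-dimensional simplex than claimed — or not a simplex at all. The reason this cannot happen is geometric: the medial face $\tau$ sits inside the coordinate plane $x_1=\cdots=x_i=0$ and $\sigma$ is a genuine $m$-simplex containing (a face equal to) $\sigma\cap\tau$, so the vertices of $\rho$ must lie off that coordinate plane and in "new" coordinate directions $x_1,\dots,x_i$; more precisely, the affine span of $\sigma$ meets the span of $\tau$ exactly in the span of $\sigma\cap\tau$, and $\pi$ is, up to translation, the quotient by the span of $\tau$. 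I would make this rigorous by writing the affine span of $\sigma$ as the direct-sum-type decomposition $\mathrm{aff}(\sigma\cap\tau) \oplus W$ where $W$ is a complement contained in $\{x_j:1\le j\le i\}$-directions, noting $\pi|_W$ is injective, and concluding that the $\pi$-images of the $\rho$-vertices, being the images of affinely independent points of $\sigma$ whose affine hull maps isomorphically under $\pi$ modulo the origin, remain in general position together with $O$. Everything else — that $\pi(\sigma)$ is then literally a simplex with the asserted dimension — is then immediate from the definition of a simplex as a join of generally positioned points and from \eqref{convex}.
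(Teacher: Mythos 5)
Your proposal is correct and follows essentially the same route as the paper: split the vertices of $\sigma$ into those on $\tau$ and those off it (the paper writes $\sigma=\alpha*\beta$ with $\beta=\sigma\cap\tau$), apply \eqref{convex} to get $\pi(\sigma)=\pi(\alpha)$ or $\pi(\alpha)*O$, and show that the projections of the off-$\tau$ vertices remain independent by a dimension count using $\ker\pi=\mathrm{span}(\tau)$. The only point worth making fully explicit is that the transversality you invoke in your ``main obstacle'' paragraph ultimately rests on the fact that the vertices of $\sigma$ together with those of $\tau$ all lie in a common maximal $k$-face of $\orangeO$ and are therefore jointly in general position --- this is precisely where the orange hypothesis enters.
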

	\begin{proof}
		For a simplex $\sigma\in\orangeO$, the vertices of $\sigma$ can always be divided into two sets, one of which contains all vertices that are in $\tau$, the other contains those not in $\tau$. This means $\sigma$ can be written in the form 
		\begin{equation}
			\sigma=\alpha*\beta,
		\end{equation}
		where $\alpha,\beta$ are faces of $\sigma$ such that $\alpha\cap\tau=\emptyset$ and $\beta=\sigma\cap\tau$. 
		Then by~\eqref{convex}, we have 
		\begin{equation}\label{eqn:formula_pi_of_sigma}
			\pi(\sigma)=\begin{cases}
				\pi(\alpha),&\mbox{if}~\beta=\emptyset,\\
				\pi(\alpha)*O,&\mbox{if}~\beta\neq\emptyset.
			\end{cases}
		\end{equation}
		Let $z_{i+1},\dots,z_{k+1}$ be the vertices of $\tau$. 
		Without loss of generality assume that $z_{k+1}=O$. 
		Let $p_{1},\dots,p_{j}$ be the vertices of $\alpha$, where $j\leq i$. Then $p_{1},\dots,p_{j}, z_{i+1},\dots,z_{k+1}$ are in general position and $p_{1},\dots,p_{j}, z_{i+1},\dots,z_{k}$ are linearly independent in $\RR^k$. We claim that then $\bigl\{\pi(p_{1}),\dots,\pi(p_{j})\bigr\}$ is also linearly independent. 
		Note that in general the projection of a linearly independent set of vectors is not necessarily linearly independent. 
		Let $V=\mathrm{span}\{p_{1},\dots,p_{j}, z_{i+1},\dots,z_{k}\}$, and let $U=\mathrm{span}\{ z_{i+1},\dots,z_{k}\}$. 
		Then $\dim V=k+j-i$, and $\dim U=k-i$. Since $U$ is the $(k-i)$-subspace containing $\tau$, we have $\mathrm{ker} (\pi)=U$, and $\dim(\pi(V))=j$.
		Therefore, the set $\bigl\{\pi(p_{1}),\dots,\pi(p_{j})\bigr\}$ spans $\pi(V)$, and it is linearly independent.
		
		This shows that both $\pi(\alpha)$
		and $\pi(\alpha)*O$ are simplices. Moreover, if $\beta=\emptyset$, then $\dim\pi(\sigma)=\dim\pi(\alpha)$. Otherwise, $\dim(\pi(\sigma))= \dim(\sigma) -\dim(\beta)$, and the proof is complete.
	\end{proof}
	
	\begin{corollary}\label{cor:cor1_for_lemma1_for_prop1}
		The projected orange $\projectedorange$ is a collection of simplices. A face of an element in $\projectedorange$ is also in $\projectedorange$. 
	\end{corollary}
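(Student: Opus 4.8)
The first assertion needs no work: every element of $\projectedorange$ is $\pi(\sigma)$ for some face $\sigma\in\orangeO$, and Lemma~\ref{lemma:lemma1_for_prop1} already tells us that such an image is a simplex. So the substance of the corollary is closure under taking faces, and the plan is to recycle the decomposition $\sigma=\alpha*\beta$ from the proof of Lemma~\ref{lemma:lemma1_for_prop1}, where $\alpha\cap\tau=\emptyset$ and $\beta=\sigma\cap\tau$. That proof shows the vertex set of $\pi(\sigma)$ consists exactly of the pairwise distinct points $\pi(p)$, as $p$ ranges over the vertices of $\alpha$, together with the origin $O$ precisely when $\beta\neq\emptyset$ (and $O$ is never one of the $\pi(p)$, since the $\pi(p)$ are linearly independent, hence nonzero). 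Consequently any face $F$ of the simplex $\pi(\sigma)$ is the join of some subset $S$ of this vertex set, and the vertices $p$ of $\alpha$ with $\pi(p)\in S$ span a well-defined face $\alpha'$ of $\alpha$.

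I would then split into two cases according to whether $O\in S$. If $O\notin S$, then $\alpha'$ is a face of $\sigma$, so $\alpha'\in\orangeO$ because a simplicial complex contains every face of each of its simplices; and since $\alpha'\cap\tau=\emptyset$, formula~\eqref{eqn:formula_pi_of_sigma} gives $\pi(\alpha')=F$, whence $F\in\projectedorange$. If $O\in S$, then $\beta\neq\emptyset$, so $\beta$ has a vertex $w$; this $w$ lies in $\tau$, hence $\pi(w)=O$. Now $\sigma':=\alpha'*\{w\}$ is a face of $\alpha*\beta=\sigma$ — because every face of a join is the join of a face of each factor — so $\sigma'\in\orangeO$, and by~\eqref{convex} together with~\eqref{eqn:formula_pi_of_sigma} we get $\pi(\sigma')=\pi(\alpha')*O=F$. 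Either way $F\in\projectedorange$. The degenerate situations ($\alpha'=\emptyset$, or $F$ a single vertex) are covered by the same formulas, using the convention that the join with the empty face is the identity.

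I do not expect a genuine obstacle: the corollary is essentially bookkeeping on top of the explicit combinatorial description of $\pi(\sigma)$ supplied by Lemma~\ref{lemma:lemma1_for_prop1}. The one step that warrants a moment's care is the case $O\in S$, where one must actually produce a simplex $\sigma'\in\orangeO$ with $\pi(\sigma')=F$; this is why I single out a vertex of $\beta$ and use both the face structure of a join and the fact that $\orangeO$ is closed under taking faces. These two statements are, of course, two of the three axioms one needs for $\projectedorange$ to be a simplicial complex, the remaining one — that the intersection of two of its elements is a common face — being established in the proof of Proposition~\ref{prop:ProjectionIsSimplicial}.
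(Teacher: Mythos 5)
Your proposal is correct and follows essentially the same route as the paper: both arguments write $\sigma$ as a join of its vertices, use the fact that $\pi$ commutes with joins (equation~\eqref{convex}) to identify a face of $\pi(\sigma)$ with the image of a sub-join of vertices of $\sigma$, and conclude via the closure of $\orangeO$ under taking faces. Your version is merely more explicit about the bookkeeping when several vertices of $\sigma$ (namely those in $\tau$) collapse to the single point $O$, a detail the paper's proof passes over silently.
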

	\begin{proof}
		It immediately follows from Lemma \ref{lemma:lemma1_for_prop1} that every element of $\projectedorange$ is a simplex. 
		Now if a simplex $\omega\in \projectedorange$, then by Definition~\ref{eq:projOrange}, there is $\sigma\in\orangeO$ such that $\pi(\sigma)=\omega$. 
		We may assume $\sigma=v_{1}*\dots*v_{j}$, for vertices $v_s\in\RR^k$ and $j\leq k$. 
		Then $\pi(v_{1}*\dots*v_{j})=\pi(v_1)*\cdots*\pi(v_j)=\omega$.
		Thus any face of $\omega$ can be written as 
		$\pi(v_{j_1})*\cdots*\pi(v_{j_\ell})=\pi(v_{j_1}*\cdots*v_{j_\ell})$ and, hence, by Definition~\ref{eq:projOrange}, this face is in $\projectedorange$.
	\end{proof}
	\begin{lemma}\label{lemma:lemma2_for_prop1}
		Let $v_{1},v_{2}$ be two vertices in $\orangeO$. If neither $v_{1}$ nor $v_{2}$ belongs to $\tau$, then $\pi(v_{1})=\pi(v_{2})$ implies $v_{1}=v_{2}$.
	\end{lemma}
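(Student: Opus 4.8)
The plan is to argue by contradiction. Assume $v_1\neq v_2$ and set $q:=\pi(v_1)=\pi(v_2)$; I aim to produce a single point that lies in two faces of $\orangeO$ outside their common face, contradicting that $\orangeO$ is a simplicial complex.

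First I would pass from the vertices to small faces. Since $\orangeO$ is pure $k$-dimensional, $v_1$ is a vertex of some maximal face $\sigma_1$, which is a $k$-simplex containing $\tau$; write $\sigma_1=\alpha_1*\tau$, where $\alpha_1$ is the face of $\sigma_1$ spanned by its vertices not in $\tau$ (as in the proof of Lemma~\ref{lemma:lemma1_for_prop1}). Then $v_1$ is a vertex of $\alpha_1$, so $T_1:=v_1*\tau$ is a face of $\sigma_1$, hence $T_1\in\orangeO$; similarly I choose $T_2:=v_2*\tau\in\orangeO$. Because $v_1,v_2\notin\tau$ and $v_1\neq v_2$, the vertex sets of $T_1$ and $T_2$ share exactly the vertices of $\tau$, so by the standard fact that two faces of a simplicial complex intersect in the face spanned by their common vertices, $T_1\cap T_2=\tau$. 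I also record two elementary facts: $\mathrm{aff}(\tau)$ is a $(k-i)$-plane contained in the $(k-i)$-plane $\{x_1=\cdots=x_i=0\}=\ker\pi$, hence $\mathrm{aff}(\tau)=\ker\pi$; and, the vertices of the $k$-simplex $\sigma_1$ being in general position, $v_1\notin\mathrm{aff}(\tau)=\ker\pi$, so $q\neq O$, and likewise $\pi(v_2)\neq O$.

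Next I would use $\pi(v_1)=\pi(v_2)$ to locate a point of $T_1\cap T_2$ off $\tau$. Put $u:=v_1-v_2$; then $\pi(u)=0$, and since $\ker\pi=\mathrm{aff}(\tau)$, the vector $u$ is a direction of the simplex $\tau$. Fix an interior point $z_0$ of $\tau$; then $z_0-\tfrac{\delta}{2}u$ and $z_0+\tfrac{\delta}{2}u$ both lie in $\tau$ for all sufficiently small $\delta>0$. With $t:=\delta/(1+\delta)\in(0,1)$, so that $(1-t)\delta=t$, set $x:=(1-t)\bigl(z_0-\tfrac{\delta}{2}u\bigr)+tv_1$. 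Then $x\in T_1=\tau*v_1$; substituting $v_1=v_2+u$ and using $(1-t)\delta=t$ gives $x=(1-t)\bigl(z_0+\tfrac{\delta}{2}u\bigr)+tv_2\in T_2$. On the other hand $\pi(x)=(1-t)\pi(z_0)+t\pi(v_1)=tq\neq O$, so $x\notin\ker\pi=\mathrm{aff}(\tau)$, whence $x\notin\tau$. This contradicts $T_1\cap T_2=\tau$, so $v_1=v_2$.

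The step I expect to require the most care is the bookkeeping in the third paragraph: choosing the perturbations $\pm\tfrac{\delta}{2}u$ of the base point $z_0$ and the parameter $t=\delta/(1+\delta)$ so that the single point $x$ admits two join-representations — one exhibiting $x\in T_1$, the other $x\in T_2$ — while keeping the perturbed base points inside $\tau$. This is precisely where I use that $\tau$ is a simplex: it has an interior point, and $u$ lies in its direction space. The remaining ingredients — the identification $\mathrm{aff}(\tau)=\ker\pi$ and the intersection property of faces in a simplicial complex — are routine.
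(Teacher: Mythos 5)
Your proof is correct and follows essentially the same route as the paper: both arguments reduce to the dichotomy that $(v_1*\tau)\cap(v_2*\tau)$ is either $\tau$ or all of $v_1*\tau=v_2*\tau$, and then use $v_1-v_2\in\ker\pi=\mathrm{aff}(\tau)$ to exhibit a point of the intersection lying off $\tau$. The only difference is cosmetic: you build that point by perturbing a relative interior point of $\tau$ in the direction $\pm(v_1-v_2)$, whereas the paper splits the coefficients of $v_1-v_2$ in the vertex basis of $\tau$ by sign and rescales to obtain two convex-combination representations of the same point.
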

	\begin{proof}
		Let $\alpha_{1}=v_{1}*\tau$ and $\alpha_{2}=v_{2}*\tau$. Both $\alpha_{1}$ and $\alpha_{2}$ are $(k-i+1)$-faces in $\orangeO$ and $\tau\subseteq \alpha_{1}\cap\alpha_{2}$. Since $\orangeO$ is a simplicial complex and $\dim \tau = k-i$, then there are only two possibilities:
		\begin{equation}\label{possibilities}
			\tau=\alpha_{1}\cap\alpha_{2}\quad \text{or}\quad 
			\alpha_{1}=\alpha_{2}.
		\end{equation}
		Let $z_{i+1},\dots,z_{k+1}$ be the vertices of $\tau$, where we again assume that $z_{k+1}$ is the origin. Since $\pi(v_1)=\pi(v_2)$, we note that $v_1-v_2$ is in the $(k-i)$-dimensional coordinate space spanned by~$\tau$. Thus
		\begin{equation}\label{lincomb}
			v_1-v_2=\beta_1z_{i+1}+\dots+\beta_{k-i}z_k.
		\end{equation}
		Without loss of generality, we assume that the first $j$ coefficients $\bigl\{\beta_{\ell}\bigr\}_{\ell=1}^j$ are negative, while the remaining $k-i-j$ coefficients $\bigr\{\beta_{\ell}\bigr\}_{\ell=j+1}^{k-i}$ are non-negative. We next rewrite~\eqref{lincomb} as follows:
		\begin{equation}\label{lincomb1}
			v_1-\beta_1z_{i+1}-\dots -\beta_{j}z_{i+j}=v_2+\beta_{j+1}z_{i+j+1}+\dots+\beta_{k-i}z_k=v.
		\end{equation}
		Let $m=\left(1+\sum_{\ell=1}^{k-i}|\beta_\ell|\right)^{-1}$, and let $u=mv$. 
		
		Since $u$ can be written as two different convex combinations, one in the vertices of $\alpha_1$ and a second in the vertices of $\alpha_2$, then $u\in\alpha_1\cap\alpha_2$.
		From~\eqref{lincomb1}, it follows that $u\notin \tau$, so by~\eqref{possibilities} we must have $\alpha_1=\alpha_2$, and hence $v_1=v_2$.
	\end{proof}
	\begin{corollary}\label{cor:unique_preimage_in_link}
		Let $\omega\in \projectedorange$ such that the origin $O$ is not contained in $\omega$. Then there exists a unique $\alpha\in\orangeO$ such that $\pi(\alpha)=\omega$.
	\end{corollary}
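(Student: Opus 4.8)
\emph{Proof proposal.} Existence is free: by the definition~\eqref{eq:projOrange} of $\projectedorange$, the face $\omega$ equals $\pi(\sigma)$ for some $\sigma\in\orangeO$, so there is at least one preimage. The whole content is uniqueness, and the plan is to combine formula~\eqref{eqn:formula_pi_of_sigma} with Lemma~\ref{lemma:lemma2_for_prop1}.

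First I would show that \emph{every} preimage in $\orangeO$ of $\omega$ is disjoint from the medial simplex $\tau$. Indeed, given $\sigma\in\orangeO$ with $\pi(\sigma)=\omega$, write $\sigma=\alpha*\beta$ with $\beta=\sigma\cap\tau$ and $\alpha\cap\tau=\emptyset$ as in Lemma~\ref{lemma:lemma1_for_prop1}; formula~\eqref{eqn:formula_pi_of_sigma} then gives $\pi(\sigma)=\pi(\alpha)*O$ whenever $\beta\neq\emptyset$, and in that case $O\in\pi(\sigma)=\omega$, contradicting the hypothesis. Hence $\beta=\emptyset$, i.e.\ $\sigma\cap\tau=\emptyset$, and by Lemma~\ref{lemma:lemma1_for_prop1} we get $\dim\pi(\sigma)=\dim\sigma$. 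Since $\omega$ is a simplex by Corollary~\ref{cor:cor1_for_lemma1_for_prop1} and has the same dimension as $\sigma$, the map $\pi$ sends the $\dim\sigma+1$ vertices of $\sigma$ bijectively onto the vertex set of $\omega$ (they span an $(\dim\sigma)$-simplex, hence are distinct).

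Now take two preimages $\sigma_1,\sigma_2\in\orangeO$ of $\omega$. By the previous paragraph both avoid $\tau$ and both have dimension $m:=\dim\omega$; let $w_0,\dots,w_m$ and $u_0,\dots,u_m$ be their respective vertices. Since $\{\pi(w_s)\}_{s=0}^{m}$ and $\{\pi(u_s)\}_{s=0}^{m}$ are both exactly the vertex set of $\omega$, after relabeling we may assume $\pi(w_s)=\pi(u_s)$ for every $s$. None of these vertices lies in $\tau$, so Lemma~\ref{lemma:lemma2_for_prop1} forces $w_s=u_s$ for all $s$, whence $\sigma_1=\sigma_2$. Taking $\alpha$ to be this common preimage finishes the argument.

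I expect the only point that needs genuine care is the reduction step — establishing that a preimage of a face not containing $O$ cannot meet $\tau$ — together with the observation that $\pi$ is then injective on the vertices of such a preimage, which is precisely what the dimension equality in Lemma~\ref{lemma:lemma1_for_prop1} supplies; once this is in place the rest is a routine vertex-by-vertex application of Lemma~\ref{lemma:lemma2_for_prop1}.
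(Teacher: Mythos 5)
Your proposal is correct and follows essentially the same route as the paper's proof: reduce to preimages disjoint from $\tau$ (the paper states this reduction without the explicit appeal to \eqref{eqn:formula_pi_of_sigma} that you supply), use Lemma~\ref{lemma:lemma1_for_prop1} to equate dimensions, match vertices up to a permutation, and conclude vertex-by-vertex with Lemma~\ref{lemma:lemma2_for_prop1}. Your extra care in justifying why a preimage cannot meet $\tau$ is a welcome addition but does not change the argument.
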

	\begin{proof}
		The existence is immediate from the definition of $\projectedorange$. Assume there are two simplices $\alpha$ and $\alpha'$ in $\orangeO$ such that $\pi(\alpha)=\pi(\alpha')=\omega$. Since $O\notin \omega$, we have $\alpha\cap \tau=\emptyset$, and $\alpha'\cap \tau=\emptyset$. Then by Lemma \ref{lemma:lemma1_for_prop1}, $\dim(\alpha)=\dim(\alpha')=\dim(\omega)=:j$. We may assume $\alpha=p_{1}*\dots*p_{j}$ and $\alpha'=p'_{1}*\dots *p'_{j}$ for vertices $p_{s}$'s and $p'_{t}$'s in $\orangeO$. Hence $\pi(p_{1})*\dots*\pi(p_{j})=\pi(p'_{1})*\dots*\pi(p'_{j})=\omega$. Therefore, possibly after a permutation, $\pi(p_{\ell})=\pi(p'_{\ell})$ for all $\ell=1,\dots,j$. By Lemma \ref{lemma:lemma2_for_prop1}, this implies $p_{\ell}=p'_{\ell}$ for all $\ell=1,\dots,j$, and $\alpha=\alpha'$.
	\end{proof}
	\begin{corollary}\label{cor:full_preimage_not_in_link}
		Let $\omega\in \projectedorange$, and let the origin $O$ be contained in $\omega$. Then there is a unique face $\sigma\in\orangeO$ such that $\pi(\sigma)=\omega$ and $\tau\subseteq \sigma$. 
	\end{corollary}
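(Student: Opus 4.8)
The plan is to mirror the structure of the proof of Corollary~\ref{cor:unique_preimage_in_link}, adapting it to the case where $O\in\omega$, so that the preimage we are after must contain the medial simplex $\tau$. The one new ingredient I would record first is the observation that if $\alpha\in\orangeO$ satisfies $\alpha\cap\tau=\emptyset$, then $O\notin\pi(\alpha)$. Indeed, the proof of Lemma~\ref{lemma:lemma1_for_prop1} shows that $\pi$ sends the vertices $p_{1},\dots,p_{j}$ of such an $\alpha$ to a linearly independent set, and $\pi(\alpha)=\mathrm{conv}\{\pi(p_{1}),\dots,\pi(p_{j})\}$; a point of this convex hull equal to $O$ would yield a convex combination $\sum_s t_s\pi(p_s)=O$ with $\sum_s t_s=1$, and linear independence forces all $t_s=0$, a contradiction. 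In particular the vertices of $\pi(\alpha)$ are exactly the $\pi(p_s)$ and none of them is $O$.

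For existence, I would take $\sigma'\in\orangeO$ with $\pi(\sigma')=\omega$ and write $\sigma'=\alpha*\beta$ with $\alpha\cap\tau=\emptyset$ and $\beta=\sigma'\cap\tau$, as in Lemma~\ref{lemma:lemma1_for_prop1}. By~\eqref{eqn:formula_pi_of_sigma}, $\pi(\sigma')$ is either $\pi(\alpha)$ or $\pi(\alpha)*O$; since $O\in\omega$ and $O\notin\pi(\alpha)$ by the observation above, we are in the second case, so $\beta\neq\emptyset$ and $\omega=\pi(\alpha)*O$. Because $\orangeO$ is pure and every maximal face contains $\tau$, the face $\sigma'$ lies in a maximal face $F$ with $\tau\subseteq F$; then $\alpha$ and $\tau$ are faces of the simplex $F$ with no common vertices, so $\sigma:=\alpha*\tau$ is a face of $F$, hence $\sigma\in\orangeO$, with $\tau\subseteq\sigma$ and, by~\eqref{convex}, $\pi(\sigma)=\pi(\alpha)*\pi(\tau)=\pi(\alpha)*O=\omega$.

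For uniqueness, suppose $\sigma_{1},\sigma_{2}\in\orangeO$ both contain $\tau$ and satisfy $\pi(\sigma_{1})=\pi(\sigma_{2})=\omega$. Writing $\sigma_{t}=\alpha_{t}*\tau$ with $\alpha_{t}\cap\tau=\emptyset$, we obtain $\pi(\alpha_{1})*O=\omega=\pi(\alpha_{2})*O$. Comparing vertex sets and using that $O$ is not a vertex of $\pi(\alpha_{1})$ or of $\pi(\alpha_{2})$, the $\pi$-images of the vertices of $\alpha_{1}$ coincide with the $\pi$-images of the vertices of $\alpha_{2}$; in particular $\dim\alpha_{1}=\dim\alpha_{2}$, and after a permutation $\pi(p_{\ell})=\pi(q_{\ell})$ for the vertices $p_{\ell}$ of $\alpha_{1}$ and $q_{\ell}$ of $\alpha_{2}$. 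Since none of these vertices lies in $\tau$, Lemma~\ref{lemma:lemma2_for_prop1} gives $p_{\ell}=q_{\ell}$ for every $\ell$, so $\alpha_{1}=\alpha_{2}$ and therefore $\sigma_{1}=\sigma_{2}$. The degenerate case $\alpha_{1}=\emptyset$, i.e.\ $\omega=\{O\}$, is covered by the same vertex-set comparison, which then forces $\alpha_{2}=\emptyset$ and $\sigma_{1}=\sigma_{2}=\tau$.

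The main obstacle — the only place the argument is not purely formal — is the observation that $O\notin\pi(\alpha)$ whenever $\alpha\cap\tau=\emptyset$; everything else is bookkeeping with joins together with Lemmas~\ref{lemma:lemma1_for_prop1} and~\ref{lemma:lemma2_for_prop1}. A secondary point to handle carefully is the claim that $\alpha*\tau$ is genuinely a face of $\orangeO$, which relies on $\orangeO$ being a star of $\tau$ (so that $\alpha$ and $\tau$ sit in a common simplex of $\orangeO$) and on $\alpha$ and $\tau$ having no shared vertices.
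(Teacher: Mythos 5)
Your proof is correct and follows essentially the same route as the paper's: write $\omega$ as the join of $O$ with a face of $\projectedorange$ not containing $O$, pull that face back to an $\alpha\in\orangeO$ with $\alpha\cap\tau=\emptyset$, set $\sigma=\alpha*\tau$, and reduce uniqueness to Lemma~\ref{lemma:lemma2_for_prop1}. The only difference is that the paper invokes Corollary~\ref{cor:unique_preimage_in_link} directly for both the existence and the uniqueness of $\alpha$, whereas you re-derive that uniqueness inline and spell out two points the paper leaves implicit (that $O\notin\pi(\alpha)$ when $\alpha\cap\tau=\emptyset$, and that $\alpha*\tau$ is genuinely a face of $\orangeO$).
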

	\begin{proof}
		By Corollary \ref{cor:cor1_for_lemma1_for_prop1}, every face $\omega$ of $\projectedorange$ is a simplex, so we may assume $\omega=\theta*O$, where $O\not\in\theta$ and $\theta$ is also a face of $\projectedorange$. 
		By Corollary \ref{cor:unique_preimage_in_link}, there is a face $\alpha\in\orangeO$ such that $\pi(\alpha)=\theta$. 
		Hence, $\pi(\alpha*\tau)=\omega$. So we can take $\sigma=\alpha*\tau$.
		To prove the uniqueness, assume there is a face $\sigma'$ such that $\pi(\sigma')=\omega$ and $\tau\subseteq \sigma'$. Then we can write $\sigma'=\alpha'*\tau$ for some $\alpha'\cap\tau=\emptyset$. Then $\pi(\alpha')=\theta$. Corollary \ref{cor:unique_preimage_in_link} implies $\alpha=\alpha'$. Hence, $\sigma=\sigma'$.
	\end{proof}
	Next corollary is an immediate consequence of Lemma \ref{lemma:lemma1_for_prop1} and Corollary \ref{cor:full_preimage_not_in_link}. We are going to use it in Section \ref{section:cofactor_on_oranges}.
	\begin{corollary}\label{cor:maximal_faces_1to1}
		There is a one-to-one correspondence between the maximal faces of an orange $\orangeO$ and its projection $\projectedorange$.
	\end{corollary}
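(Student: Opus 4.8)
The plan is to define the map $\Phi\colon\sigma\mapsto\pi(\sigma)$ on the set of maximal faces of $\orangeO$ and show that it is a well-defined bijection onto the set of maximal faces of $\projectedorange$. First I would record that, by Definition~\ref{def:orange}, every maximal face $\sigma$ of $\orangeO$ is a $k$-simplex containing the medial simplex $\tau$, so $\sigma\cap\tau=\tau$ has dimension $k-i$, and Lemma~\ref{lemma:lemma1_for_prop1} gives $\dim\pi(\sigma)=k-(k-i)=i$. Since $\orangeO$ is pure $k$-dimensional, every face of $\orangeO$ lies in one of its maximal faces; applying $\pi$ and the dimension count just made, every face of $\projectedorange$ lies in an $i$-simplex. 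Hence $\projectedorange$ is pure $i$-dimensional, its maximal faces are exactly its $i$-dimensional faces, and $\Phi$ indeed takes values in this set.

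For injectivity, suppose $\sigma_1,\sigma_2$ are maximal faces of $\orangeO$ with $\pi(\sigma_1)=\pi(\sigma_2)=\omega$. Both contain $\tau$, so the origin $O=\pi(\tau)$ lies in $\omega$, and Corollary~\ref{cor:full_preimage_not_in_link} — which asserts uniqueness of the face of $\orangeO$ that maps onto $\omega$ and contains $\tau$ — forces $\sigma_1=\sigma_2$. For surjectivity, take a maximal face $\omega$ of $\projectedorange$, namely an $i$-simplex; since $\projectedorange$ is a star of the vertex $O$ by Proposition~\ref{prop:ProjectionIsSimplicial}, we have $O\in\omega$, so Corollary~\ref{cor:full_preimage_not_in_link} produces a face $\sigma\in\orangeO$ with $\pi(\sigma)=\omega$ and $\tau\subseteq\sigma$. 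Then $\sigma\cap\tau=\tau$, and Lemma~\ref{lemma:lemma1_for_prop1} gives $\dim\sigma=\dim\omega+\dim\tau=i+(k-i)=k$, so $\sigma$ is a maximal face of $\orangeO$ with $\Phi(\sigma)=\omega$. Combining the three steps yields the claimed one-to-one correspondence.

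The only point that requires any care — and the closest thing to an obstacle — is the identification of the maximal faces of $\projectedorange$ with its $i$-dimensional faces, since a star of a vertex need not be pure in general; this is exactly where purity of $\orangeO$ enters, propagated through $\pi$ by the dimension bookkeeping of Lemma~\ref{lemma:lemma1_for_prop1}. Everything else is a direct application of Corollary~\ref{cor:full_preimage_not_in_link} together with Lemma~\ref{lemma:lemma1_for_prop1}, as the statement of the corollary already advertises.
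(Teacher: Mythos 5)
Your proof is correct and follows essentially the same route as the paper's: Lemma~\ref{lemma:lemma1_for_prop1} shows images of maximal faces are $i$-simplices, and Corollary~\ref{cor:full_preimage_not_in_link} supplies existence and uniqueness of the preimage containing $\tau$. You are in fact more careful than the paper, which omits both the purity argument identifying the maximal faces of $\projectedorange$ with its $i$-faces and the dimension count showing the preimage $\sigma=\alpha*\tau$ is itself maximal.
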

	\begin{proof}
		If $\sigma\in \orangeO$ is a maximal face then the medial simplex $\tau\subseteq\sigma$, and so Lemma \ref{lemma:lemma1_for_prop1} implies that $\pi(\sigma)$ is a simplex of dimension $i$. 
		The existence and uniqueness of $\sigma\in\orangeO$ for every maximal face of $\projectedorange$ follow from Corollary \ref{cor:full_preimage_not_in_link}.
	\end{proof}
	\begin{lemma}\label{lemma:lemma3_for_prop1}
		For any two faces $\sigma_{1},\sigma_{2}\in\orangeO$ containing $\tau$, we have 
		\begin{equation}\label{eq:lemma5eqn}
			\pi(\sigma_{1})\cap\pi(\sigma_{2})=\pi(\sigma_{1}\cap\sigma_{2}).
		\end{equation}
		In particular, \eqref{eq:lemma5eqn} implies that $\pi(\sigma_{1})\cap\pi(\sigma_{2})\in\projectedorange$ in this case.
	\end{lemma}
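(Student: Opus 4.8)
The inclusion $\pi(\sigma_1\cap\sigma_2)\subseteq\pi(\sigma_1)\cap\pi(\sigma_2)$ holds for any map, so the plan is to prove the opposite inclusion and then read off the ``in particular'' statement. First I would set up notation that exploits the hypothesis that both $\sigma_j$ contain $\tau$: write $\sigma_j=\alpha_j*\tau$ with $\alpha_j\cap\tau=\emptyset$, and put $\gamma=\alpha_1\cap\alpha_2$. Since $\orangeO$ is a geometric simplicial complex, $\sigma_1\cap\sigma_2$ is a common face, and comparing vertex sets (using that the $\alpha_j$ avoid $\tau$) gives $\sigma_1\cap\sigma_2=\gamma*\tau$ as point sets; in particular $0=z_{k+1}\in\tau\subseteq\sigma_1\cap\sigma_2$. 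By \eqref{convex} together with $\pi(\tau)=\{O\}$ we have $\pi(\sigma_j)=\pi(\alpha_j)*O$ and $\pi(\sigma_1\cap\sigma_2)=\pi(\gamma)*O$, and by Lemma~\ref{lemma:lemma1_for_prop1} each of these is a genuine simplex whose vertices, apart from $O$, are the points $\pi(v)$ for $v$ a vertex of $\alpha_j$ (respectively of $\gamma$), which are moreover linearly independent by that lemma.

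Now fix $y\in\pi(\sigma_1)\cap\pi(\sigma_2)$ and write it in barycentric form over each of these simplices: $y=\sum_{p\in\alpha_1}\lambda_p\pi(p)=\sum_{q\in\alpha_2}\mu_q\pi(q)$ with $\lambda_p,\mu_q\ge0$, $\sum_p\lambda_p\le1$, $\sum_q\mu_q\le1$ (the slack sitting on the apex $O$). Lift these to $W_1=\sum_p\lambda_p p\in\sigma_1$ and $W_2=\sum_q\mu_q q\in\sigma_2$, where the slack now sits on the vertex $z_{k+1}=0$. Then $\pi(W_1)=\pi(W_2)=y$, so $W_1-W_2\in\ker\pi=\mathrm{span}(z_{i+1},\dots,z_k)$; writing $W_1-W_2=\sum_{s=i+1}^{k}c_sz_s$ and cancelling the negative part as in the proof of Lemma~\ref{lemma:lemma2_for_prop1}, set
\[
W=W_1+\sum_{c_s<0}|c_s|\,z_s=W_2+\sum_{c_s>0}c_s\,z_s,
\]
which is a nonnegative combination of the vertices of $\sigma_1$ with coefficient sum $A:=\sum_p\lambda_p+\sum_{c_s<0}|c_s|$, and simultaneously of the vertices of $\sigma_2$ with coefficient sum $B:=\sum_q\mu_q+\sum_{c_s>0}c_s$.

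The heart of the argument is to push $W$ into the common face. Let $M=\max\{1,A,B\}$. Then $W/M$ is an honest convex combination of the vertices of $\sigma_1$ — the deficit $1-A/M\ge0$ being absorbed harmlessly by the vertex $z_{k+1}=0$ — and likewise of the vertices of $\sigma_2$; hence $W/M\in\sigma_1\cap\sigma_2=\gamma*\tau$. Applying $\pi$ and using $\pi(z_s)=O$ for $s\le k$ yields $y/M=\pi(W/M)\in\pi(\gamma*\tau)=\pi(\gamma)*O$, so $y/M=\sum_{p\in\gamma}\rho_p\pi(p)$ for some $\rho_p\ge0$ with $\sum_p\rho_p\le1$. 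Comparing $y=\sum_{p\in\alpha_1}\lambda_p\pi(p)$ with $y=\sum_{p\in\gamma}M\rho_p\pi(p)$ and invoking the linear independence of $\{\pi(p):p\in\alpha_1\}$ from Lemma~\ref{lemma:lemma1_for_prop1} forces $\lambda_p=M\rho_p$ for $p\in\gamma$ and $\lambda_p=0$ for $p\in\alpha_1\setminus\gamma$. Therefore $y=\sum_{p\in\gamma}\lambda_p\pi(p)$ with nonnegative coefficients of sum $\le1$, i.e.\ $y\in\pi(\gamma)*O=\pi(\sigma_1\cap\sigma_2)$, which proves \eqref{eq:lemma5eqn}. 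The closing assertion is then immediate: $\sigma_1\cap\sigma_2$ is a face of $\orangeO$, so $\pi(\sigma_1\cap\sigma_2)\in\pi(\orangeO)=\projectedorange$.

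I expect the last step of the middle paragraph — verifying that a suitable rescaling of the lifted point $W$ actually lands in $\sigma_1\cap\sigma_2$ — to be the main obstacle, and the point where the \emph{geometric} (not merely combinatorial) simplicial-complex hypothesis on $\orangeO$ is genuinely used: the cancellation of negative coefficients produces a point in the nonnegative hull of each $\sigma_j$'s vertices, and the normalization by $M$, crucially exploiting that $0$ is a vertex of every $\sigma_j$, converts those nonnegative hulls back into the simplices themselves so that $\sigma_1\cap\sigma_2=\gamma*\tau$ can be applied. The remaining work is careful barycentric bookkeeping together with the two uses of the linear-independence statement contained in Lemma~\ref{lemma:lemma1_for_prop1}.
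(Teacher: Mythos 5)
Your argument is correct. It proves the nontrivial inclusion $\pi(\sigma_1)\cap\pi(\sigma_2)\subseteq\pi(\sigma_1\cap\sigma_2)$ directly: you lift a point $y$ to preimages $W_1\in\sigma_1$, $W_2\in\sigma_2$, use $W_1-W_2\in\ker\pi=\mathrm{span}(z_{i+1},\dots,z_k)$ to cancel the negative coefficients, rescale (exploiting that the origin is a vertex of $\tau$, hence of every $\sigma_j$), land in $\sigma_1\cap\sigma_2=\gamma*\tau$, and then read off $\lambda_p=0$ for $p\in\alpha_1\setminus\gamma$ from the linear independence of $\{\pi(p)\}$ supplied by Lemma~\ref{lemma:lemma1_for_prop1}. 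The paper instead argues by contradiction: it picks preimages $a_1,a_2$ of $b$, assumes neither lies in $\sigma_1\cap\sigma_2$, deduces $(a_1*\tau)\cap(a_2*\tau)=\tau$, and then invokes ``a similar reasoning as in the proof of Lemma~\ref{lemma:lemma2_for_prop1}'' to produce a common point of $a_1*\tau$ and $a_2*\tau$ outside $\tau$. The key mechanism is the same in both proofs --- the cancellation-and-rescaling trick of Lemma~\ref{lemma:lemma2_for_prop1} --- but your packaging has two advantages: it is fully self-contained rather than deferring to an analogy with an earlier proof, and it identifies the limit face $\gamma*\tau$ explicitly, so the case $b=O$ and the case $\gamma=\emptyset$ need no separate treatment. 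The cost is heavier barycentric bookkeeping; the paper's version is shorter once the case analysis and the reference to Lemma~\ref{lemma:lemma2_for_prop1} are accepted. The identification $\sigma_1\cap\sigma_2=\gamma*\tau$ via common vertex sets, on which your argument leans, is a standard fact about geometric simplicial complexes and is used implicitly by the paper as well.
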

	\begin{proof}
		We only need to prove $\pi(\sigma_{1})\cap\pi(\sigma_{2})\subseteq \pi(\sigma_{1}\cap\sigma_{2})$.
		Let $b\in\pi(\sigma_{1})\cap\pi(\sigma_{2})$, and let $b \neq O$. Then there exist $a_{1}\in\sigma_{1}$ and $a_{2}\in\sigma_{2}$ such that $b=\pi(a_{1})=\pi(a_{2})$. If either $a_{1}\in\sigma_{1}\cap\sigma_{2}$ or $a_{2}\in\sigma_{1}\cap\sigma_{2}$ then $b \in\pi(\sigma_{1}\cap\sigma_{2})$, and we are done. 
		Suppose that neither $a_{1}$ nor $a_{2}$ is in $\sigma_{1}\cap\sigma_{2}$. Then $(a_{1}*\tau)\cap\sigma_{2}=\tau$ and $(a_{2}*\tau)\cap\sigma_{1}=\tau$. Since $(a_{1}*\tau)\cap(a_{2}*\tau)\subseteq \sigma_{1}\cap \sigma_{2}$, we have 
		$$(a_{1}*\tau)\cap(a_{2}*\tau)=(a_{1}*\tau)\cap(a_{2}*\tau)\cap\sigma_{1}=\tau.$$
		Without loss of generality, we may assume that $x_{1}(b)=1$ and $x_{2}(b)=\dots=x_{i}(b)=0$. Note that $a_{1}*\tau$ and $a_{2}*\tau$ are in the same $(k-i+1)$-subspace $V\subseteq\RR^{k}$ defined by $x_{2}=\dots=x_{i}=0$. By a similar reasoning as in the proof of Lemma \ref{lemma:lemma2_for_prop1}, we get $\tau\neq (a_{1}*\tau)\cap(a_{2}*\tau)$, which is a contradiction. Therefore, the hypothesis that neither $a_{1}$ nor $a_{2}$ is in $\sigma_{1}\cap\sigma_{2}$ is false.
	\end{proof}
	
	\begin{corollary}\label{cor:intersection_one_in_link}
		For any two faces $\alpha,\sigma\in\orangeO$ such that $\alpha\cap\tau=\emptyset$, we have
		\begin{equation*}
			\pi(\alpha)\cap\pi(\sigma)=\pi(\alpha\cap\sigma).
		\end{equation*}
		In particular, $\pi(\alpha)\cap\pi(\sigma)\in\projectedorange$ in this case.
	\end{corollary}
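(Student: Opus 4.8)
The plan is to establish the nontrivial inclusion $\pi(\alpha)\cap\pi(\sigma)\subseteq\pi(\alpha\cap\sigma)$; the reverse one is immediate, since $\alpha\cap\sigma$ is a face of both $\alpha$ and $\sigma$, whence $\pi(\alpha\cap\sigma)$ lies in both $\pi(\alpha)$ and $\pi(\sigma)$. The strategy is to reduce to Lemma~\ref{lemma:lemma3_for_prop1}, which handles faces \emph{containing} $\tau$, by enlarging $\alpha$ and $\sigma$ to faces that contain $\tau$, and afterwards to remove the extra vertex $O$ created by this enlargement, using the hypothesis $\alpha\cap\tau=\emptyset$.

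First I would put $\hat\alpha:=\alpha*\tau$ and let $\hat\sigma$ be the simplex spanned by the vertices of $\sigma$ together with those of $\tau$. Since $\orangeO$ is a $(k,i)$-orange, any maximal face containing $\alpha$ (respectively $\sigma$) also contains $\tau$; therefore $\hat\alpha$ and $\hat\sigma$ are faces of $\orangeO$, both containing $\tau$, and $\hat\alpha$ really is the join $\alpha*\tau$ because $\alpha\cap\tau=\emptyset$. Using that in a simplicial complex the intersection of two faces is the face on the common vertices, and using $\alpha\cap\tau=\emptyset$ once more to identify that common vertex set as the disjoint union $V(\alpha\cap\sigma)\sqcup V(\tau)$, one gets $\hat\alpha\cap\hat\sigma=(\alpha\cap\sigma)*\tau$. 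Lemma~\ref{lemma:lemma3_for_prop1} applied to $\hat\alpha$ and $\hat\sigma$ then yields $\pi(\hat\alpha)\cap\pi(\hat\sigma)=\pi\bigl((\alpha\cap\sigma)*\tau\bigr)=\pi(\alpha\cap\sigma)*O$, by \eqref{convex}.

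Now let $b\in\pi(\alpha)\cap\pi(\sigma)$. From $\alpha\subseteq\hat\alpha$ and $\sigma\subseteq\hat\sigma$ we get $b\in\pi(\hat\alpha)\cap\pi(\hat\sigma)=\pi(\alpha\cap\sigma)*O$, while also $b\in\pi(\alpha)$. The last step is to intersect these two sets inside the simplex $\pi(\hat\alpha)=\pi(\alpha)*O$: this is a genuine simplex by Lemma~\ref{lemma:lemma1_for_prop1} (as $\hat\alpha\cap\tau=\tau$ forces $\dim\pi(\hat\alpha)=\dim\hat\alpha-\dim\tau=\dim\alpha+1$), so its $\dim\alpha+2$ vertices $\pi(V(\alpha))\cup\{O\}$ are affinely independent; in particular $O\notin\pi(\alpha)$. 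Both $\pi(\alpha)$ and $\pi(\alpha\cap\sigma)*O$ are faces of $\pi(\hat\alpha)$, so their intersection is the face on the common vertices, namely $\pi(V(\alpha))\cap\bigl(\pi(V(\alpha\cap\sigma))\cup\{O\}\bigr)=\pi(V(\alpha\cap\sigma))$, which spans $\pi(\alpha\cap\sigma)$. Hence $b\in\pi(\alpha\cap\sigma)$, proving the equality; the membership $\pi(\alpha)\cap\pi(\sigma)\in\projectedorange$ then follows at once because $\alpha\cap\sigma\in\orangeO$ and $\projectedorange=\pi(\orangeO)$.

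I expect the main obstacle to be the bookkeeping in the last paragraph: $\pi$ collapses all of $\tau$ to the single point $O$ and so is not injective on the vertices of $\hat\alpha$, so one cannot simply transport the face lattice of $\hat\alpha$ through $\pi$; instead the argument must be run inside the image simplex $\pi(\hat\alpha)$. The fact that makes everything close up is that $\alpha\cap\tau=\emptyset$ forces $O\notin\pi(\alpha)$, which is precisely what lets us discard the spurious vertex $O$ from the intersection.
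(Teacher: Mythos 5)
Your proof is correct and follows essentially the same route as the paper's: both arguments enlarge $\alpha$ and $\sigma$ to faces containing $\tau$ (your $\hat\sigma$ coincides with the paper's $\alpha'*\tau$ for $\sigma=\alpha'*\beta'$), reduce to Lemma~\ref{lemma:lemma3_for_prop1}, and then use $O\notin\pi(\alpha)$ to discard the extra vertex $O$. The only cosmetic difference is that you finish with a face-lattice argument inside the simplex $\pi(\hat\alpha)$ where the paper uses a line-segment/barycentric-coordinate argument.
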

	\begin{proof}
		We only need to show that $\pi(\alpha)\cap\pi(\sigma)\subseteq \pi(\alpha\cap\sigma)$.
		Note that $\sigma=\alpha'*\beta'$ such that $\alpha'\cap\tau=\emptyset$ and $\beta'\subseteq\tau$. 
		Since $\orangeO$ is a simplicial complex, then $\alpha\cap\sigma=\alpha\cap \alpha'$, and $(\alpha*\tau)\cap(\alpha'*\tau)=(\alpha\cap\alpha)*\tau$. 
		By Lemma \ref{lemma:lemma3_for_prop1}, we have that $\pi(\alpha\cap\alpha')*\pi(\tau)=(\pi(\alpha)*\pi(\tau))\cap(\pi(\alpha')*\pi(\tau))$. 
		This means that for any $b\in\pi(\alpha)\cap\pi(\sigma)$, it holds $b\in\pi(\alpha\cap\alpha')*O$. 
		So we can assume that $b$ is on the line segment $\overline{b'O}$, where $b'\in\pi(\alpha\cap\alpha')$. 
		This implies that both $b$ and $b'$ are in $\pi(\alpha)$. Since $O\not\in\pi(\alpha)$, then $b=b'$. 
		This shows $\pi(\alpha)\cap\pi(\sigma)=\pi(\alpha\cap\sigma)$.
	\end{proof}

	\begin{lemma}\label{lemma:lemma4_for_prop1}
		For any $\omega_{1},\omega_{2}\in\projectedorange$, we have $\omega_{1}\cap\omega_{2}\in\projectedorange$.
	\end{lemma}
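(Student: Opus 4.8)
The plan is to reduce the statement to the intersection results already proved, namely Lemma~\ref{lemma:lemma3_for_prop1} and Corollary~\ref{cor:intersection_one_in_link}, by a case split on whether the origin $O$ lies in $\omega_1$ and in $\omega_2$. The enabling remark, read off from Lemma~\ref{lemma:lemma1_for_prop1}, is that a face $\sigma\in\orangeO$ satisfies $O\in\pi(\sigma)$ precisely when $\sigma\cap\tau\neq\emptyset$; hence $O\notin\omega$ forces every preimage of $\omega$ to be disjoint from $\tau$, while $O\in\omega$ lets us pick (by Corollary~\ref{cor:full_preimage_not_in_link}) a preimage that contains $\tau$.

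First I would treat the case $O\in\omega_1$ and $O\in\omega_2$. Here Corollary~\ref{cor:full_preimage_not_in_link} supplies faces $\sigma_1,\sigma_2\in\orangeO$ with $\tau\subseteq\sigma_j$ and $\pi(\sigma_j)=\omega_j$ for $j=1,2$, and then Lemma~\ref{lemma:lemma3_for_prop1} gives directly $\omega_1\cap\omega_2=\pi(\sigma_1)\cap\pi(\sigma_2)=\pi(\sigma_1\cap\sigma_2)\in\projectedorange$.

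Next I would treat the remaining case, where at least one of the two faces, say $\omega_1$, does not contain $O$. By Corollary~\ref{cor:unique_preimage_in_link} there is $\alpha\in\orangeO$ with $\pi(\alpha)=\omega_1$, and since $O\notin\pi(\alpha)$ the remark above forces $\alpha\cap\tau=\emptyset$. Choosing any $\sigma\in\orangeO$ with $\pi(\sigma)=\omega_2$, Corollary~\ref{cor:intersection_one_in_link} applies with this $\alpha$ and $\sigma$ and yields $\omega_1\cap\omega_2=\pi(\alpha)\cap\pi(\sigma)=\pi(\alpha\cap\sigma)\in\projectedorange$.

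These two cases are exhaustive, so the lemma follows. I do not expect a genuine obstacle: the geometric content has already been packaged into the preceding lemmas, and the only points requiring care are that the case distinction is complete and that in each case one selects a representative preimage of the correct type — one containing $\tau$ when $O\in\omega$, and one disjoint from $\tau$ when $O\notin\omega$ — which is exactly what Corollaries~\ref{cor:full_preimage_not_in_link} and~\ref{cor:unique_preimage_in_link} provide.
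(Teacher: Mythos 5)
Your proposal is correct and follows essentially the same route as the paper: the same case split on whether $O$ lies in the faces, with Corollary~\ref{cor:full_preimage_not_in_link} and Lemma~\ref{lemma:lemma3_for_prop1} handling the case $O\in\omega_1\cap\omega_2$, and Corollary~\ref{cor:unique_preimage_in_link} together with Corollary~\ref{cor:intersection_one_in_link} handling the case where some $\omega_j$ misses $O$. Your explicit remark that $O\notin\pi(\sigma)$ forces $\sigma\cap\tau=\emptyset$ is the same observation the paper uses implicitly inside the proof of Corollary~\ref{cor:unique_preimage_in_link}.
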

	\begin{proof}
		If $O\not\in\omega_{1}$, then by Corollary \ref{cor:unique_preimage_in_link}, there exists a unique $\alpha\in\orangeO$ such that $\pi(\alpha)=\omega_1$. 
		Let $\sigma$ be a face in $\orangeO$ such that $\pi(\sigma)=\omega_{2}$. 
		By Corollary \ref{cor:intersection_one_in_link} we have that $\omega_{1}\cap\omega_{2}=\pi(\alpha\cap\sigma)$, and so $\omega_{1}\cap\omega_{2}\in \projectedorange$.
		
		If $O$ is in both $\omega_{1}$ and $\omega_{2}$, then by Corollary \ref{cor:full_preimage_not_in_link}, there exists $\sigma_{l}\in\orangeO$ such that $\pi(\sigma_{l})=\omega_{l}$ and $\tau\subseteq\sigma_{l}$, for $l=1,2$ respectively. 
		By Lemma \ref{lemma:lemma3_for_prop1}, it follows $\omega_{1}\cap\omega_{2}=\pi(\sigma_{1}\cap\sigma_{2})\in\projectedorange$.
	\end{proof}
	Now we are ready to prove Proposition \ref{prop:ProjectionIsSimplicial}.
	\begin{proof}[Proof of Proposition \ref{prop:ProjectionIsSimplicial}]
		The fact that $\projectedorange$ is a simplicial complex follows from Corollary \ref{cor:cor1_for_lemma1_for_prop1} and Lemma \ref{lemma:lemma4_for_prop1}. Note that $O$ is the only interior vertex of $\projectedorange$ and every facet of $\projectedorange$ contains $O$, so $\projectedorange$ must be the closure of the star of the vertex $O$.
	\end{proof}
	
	\section{Cofactors}\label{section:cofactor_on_oranges}
	In this section and thereafter, we use the notion of filtered vector spaces. 
	Recall that a \emph{filtered vector space} is a vector space $V$ with a nested sequence of subspaces $\bigl\{V_{\leq d}\subseteq V\colon d=0,1,2,\dots\bigr\}$ such that
	\begin{equation*}
		\{0\}\subseteq V_{\leq 0}\subseteq V_{\leq 1}\subseteq V_{\leq 2}\subseteq\cdots
	\end{equation*}
	and that
	\begin{equation*}
		V=\bigcup_{d\geq 0}V_{\leq d}.
	\end{equation*}
	In this article, we always assume $V_{\leq d}$ is finite dimensional for each $d$.
	Let $\Delta$ be a fixed finite $k$-dimensional triangulation of a domain $\Omega\subseteq \RR^k$. 
	If $r\geq 0$ is an integer, the \emph{total spline space} over $\Delta$ is defined as
	\begin{equation}\label{eq:def_total_spline_space}
		S^{r}(\Delta)=\bigcup_{d\geq 0}S^{r}_{d}(\Delta).
	\end{equation}
	In particular, $S^{r}(\Delta)$ is a filtered vector space. Denote by $\Delta_j$ the set of $j$-dimensional faces (or $j$-faces) of $\Delta$, for $0\leq j\leq k$. 
	We say that two $k$-faces $\sigma,\sigma'\in\Delta_k$ are \emph{maximal adjacent faces}, or simply \emph{adjacent}, if their intersection is a $(k-1)$-face of $\Delta$ i.e., if $\sigma$ and $\sigma'$ share (or have in common) a $(k-1)$-face of $\Delta$.
	
	We recall from~\cite{Billera-1988} that 
	$f=(f_\sigma\colon \sigma\in\Delta_k)\in S^{r}(\Delta)$ if and only if for every pair $\sigma,\sigma'\in\Delta_k$ such that $\sigma\cap\sigma'=\varepsilon\in\Delta_{k-1}$ there exists a polynomial $c_{\varepsilon}$ such that $f_{\sigma}-f_{\sigma'} = c_{\varepsilon}\cdot \ell_{\varepsilon}^{r+1}$, 
	where $\ell_{\varepsilon}$ is the linear polynomial defining the hyperplane containing $\varepsilon$.
	This result is called the \emph{cofactor criterion} for $C^r$-splines. 
	
	Throughout this section, we follow the notation introduced in Section \ref{section:Geometry}, and denote by $\orangeO$ a $(k,i)$-orange.
	We embedded $\orangeO$ in $\RR^{k}$, and up to a change of coordinates assume that the medial simplex $\tau$ of $\orangeO$ satisfies $x_{1}=\cdots=x_{i}=0$. 
	In these coordinates, we consider the projection $\pi$ along $\tau$ given in \eqref{eq:projection}. 
	We denote by $\mathcal{C}$ the image of $\orangeO$ by $\pi$, as defined in \eqref{eq:projOrange}.
	
	The orange $\orangeO$ is by definition a $k$-dimensional simplicial complex (see Definition \ref{def:orange}), and Proposition \ref{prop:ProjectionIsSimplicial} implies that $\mathcal{C}$ is an $i$-dimensional simplicial complex.
	Following the notation above, we denote by $\orangeO_k$ the set of $k$-dimensional (or maximal) faces of $\orangeO$.
	
	The following proposition shows how the total $C^r$-spline spaces over $\orangeO$ and $\mathcal{C}$ are related.
	\begin{proposition}\label{Prop:tensor_product_structure_on_oranges}
		For any integer $r\geq 0$, the total spline space over $\orangeO$ satisfies
		\begin{equation}\label{eq:tensor_product_structure_on_oranges}
			S^{r}(\orangeO)\simeq S^{r}(\projectedorange)\otimes_{\RR}\RR[x_{i+1},\dots,x_{k}]
		\end{equation}
		as filtered vector spaces, that is, for each degree $d$
		\begin{equation*}
			S^{r}_{d}(\orangeO)\simeq (S^{r}(\projectedorange)\otimes_{\RR}\RR[x_{i+1},\dots,x_{k}])_{\leq d},
		\end{equation*}
		where $(S^{r}(\projectedorange)\otimes_{\RR}\RR[x_{i+1},\dots,x_{k}])_{\leq d}$ is the space spanned by 
		\[\bigl\{(g,h)\in S^{r}(\projectedorange)\otimes_{\RR}\RR[x_{i+1},\dots,x_{k}]\colon \deg g+\deg h\leq d\bigr\}.\]
	\end{proposition}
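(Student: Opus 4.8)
The plan is to exhibit an explicit isomorphism of filtered vector spaces
$\Phi\colon S^{r}(\projectedorange)\otimes_{\RR}\RR[x_{i+1},\dots,x_{k}]\to S^{r}(\orangeO)$,
built from the bijection between maximal faces of $\orangeO$ and $\projectedorange$ given by Corollary~\ref{cor:maximal_faces_1to1}. (The extreme cases $i=0$ and $i=k$ are immediate, so assume $1\le i\le k-1$.) The key geometric observation is that any $(k-1)$-face $\varepsilon$ of $\orangeO$ shared by two maximal faces must contain the medial simplex $\tau$, since $\tau$ lies in every maximal face; hence the hyperplane of $\RR^{k}$ spanned by $\varepsilon$ contains the linear span of $\tau$, which is precisely the coordinate subspace $\{x_{1}=\dots=x_{i}=0\}$, and it passes through the origin since $O$ is a vertex of $\tau$. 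Consequently $\ell_{\varepsilon}$ is a homogeneous linear form in $x_{1},\dots,x_{i}$ only, and — using that $\dim\pi(\varepsilon)=i-1$ by Lemma~\ref{lemma:lemma1_for_prop1} — this same form read on $\RR^{i}$ is a nonzero scalar multiple of the linear form $\ell_{\pi(\varepsilon)}$ defining the affine hull of $\pi(\varepsilon)\subseteq\RR^{i}$ (which also passes through $O$).

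I define $\Phi$ on elementary tensors: for $g=(g_{\omega})_{\omega}\in S^{r}(\projectedorange)$ and $h\in\RR[x_{i+1},\dots,x_{k}]$, let $\Phi(g\otimes h)$ be the tuple whose component on the maximal face $\sigma\leftrightarrow\omega=\pi(\sigma)$ is $g_{\omega}(x_{1},\dots,x_{i})\,h(x_{i+1},\dots,x_{k})$, and extend $\RR$-linearly. To check $\Phi(g\otimes h)\in S^{r}(\orangeO)$ via the cofactor criterion, take adjacent maximal faces $\sigma,\sigma'$ with $\sigma\cap\sigma'=\varepsilon$; by Lemma~\ref{lemma:lemma3_for_prop1} their images $\omega,\omega'$ are adjacent in $\projectedorange$ with common face $\pi(\varepsilon)$, the cofactor criterion on $\projectedorange$ gives $g_{\omega}-g_{\omega'}=c\,\ell_{\pi(\varepsilon)}^{r+1}$ for some $c\in\RR[x_{1},\dots,x_{i}]$, and multiplying by $h$ and invoking the first paragraph shows $f_{\sigma}-f_{\sigma'}$ is divisible by $\ell_{\varepsilon}^{r+1}$.

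For bijectivity I fix the monomial basis $\{h_{m}\}$ of $\RR[x_{i+1},\dots,x_{k}]$, which is a free basis of $\RR[x_{1},\dots,x_{k}]$ over $\RR[x_{1},\dots,x_{i}]$, so every element of the tensor product is uniquely $\sum_{m}g_{m}\otimes h_{m}$. Injectivity is then immediate: $\Phi(\sum_{m}g_{m}\otimes h_{m})=0$ forces $g_{m,\omega}=0$ for all $m,\omega$. For surjectivity, given $f=(f_{\sigma})_{\sigma}\in S^{r}(\orangeO)$, expand $f_{\sigma}=\sum_{m}g_{m,\omega}\,h_{m}$ with $g_{m,\omega}\in\RR[x_{1},\dots,x_{i}]$ and set $g_{m}:=(g_{m,\omega})_{\omega}$. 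For adjacent maximal faces $\omega,\omega'$ of $\projectedorange$ sharing $\rho$, Corollary~\ref{cor:full_preimage_not_in_link} identifies the common $(k-1)$-face $\varepsilon:=\sigma_{\omega}\cap\sigma_{\omega'}$ of $\orangeO$ with $\pi(\varepsilon)=\rho$; expanding the cofactor relation $f_{\sigma_{\omega}}-f_{\sigma_{\omega'}}=c_{\varepsilon}\,\ell_{\varepsilon}^{r+1}$ in the free basis $\{h_{m}\}$ and comparing coefficients (valid because $\ell_{\varepsilon}\in\RR[x_{1},\dots,x_{i}]$) yields $g_{m,\omega}-g_{m,\omega'}=c_{m}\,\ell_{\rho}^{r+1}$, so $g_{m}\in S^{r}(\projectedorange)$ and $f=\Phi(\sum_{m}g_{m}\otimes h_{m})$.

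Finally, the filtration is respected: because the $h_{m}$ are monomials in $x_{i+1},\dots,x_{k}$ while the $g_{m,\omega}$ are polynomials in $x_{1},\dots,x_{i}$, the top-degree terms of the summands $g_{m,\omega}h_{m}$ cannot cancel, so $\deg f_{\sigma}=\max_{m}(\deg g_{m,\omega}+\deg h_{m})$; hence every polynomial piece of $f$ has degree $\le d$ exactly when $\deg g_{m,\omega}+\deg h_{m}\le d$ for all $m,\omega$, which is precisely the condition that $\sum_{m}g_{m}\otimes h_{m}$ lies in $(S^{r}(\projectedorange)\otimes_{\RR}\RR[x_{i+1},\dots,x_{k}])_{\le d}$; thus $\Phi$ carries this subspace onto $S^{r}_{d}(\orangeO)$. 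I expect the main obstacle to be the geometric claim in the first paragraph — that $\ell_{\varepsilon}$ involves only the variables $x_{1},\dots,x_{i}$ and restricts to $\ell_{\pi(\varepsilon)}$ on $\RR^{i}$ — after which the rest is routine bookkeeping with the free-basis decomposition and the cofactor criterion on both complexes.
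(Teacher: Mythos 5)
Your proof is correct and follows essentially the same route as the paper's: the same map $(g,h)\mapsto(h\,g_{\pi(\sigma)})_{\sigma}$ built on the bijection of maximal faces, the same key observation that a shared $(k-1)$-face contains $\tau$ so that $\ell_{\varepsilon}$ lies in $\RR[x_{1},\dots,x_{i}]$ and agrees with $\ell_{\pi(\varepsilon)}$, and the same coefficient-wise comparison in the monomial basis of $\RR[x_{i+1},\dots,x_{k}]$ for surjectivity and for the degree bookkeeping. Your first paragraph in fact spells out the justification for $\ell_{\varepsilon}\in\RR[x_{1},\dots,x_{i}]$ a little more explicitly than the paper does.
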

	\begin{proof}
		By Corollary \ref{cor:maximal_faces_1to1}, the projection $\pi$ establishes a bijective correspondence between the maximal faces of $\orangeO$ and those of $\projectedorange$.
		Hence, every spline $g\in S^{r}(\projectedorange)$ can be written as the tuple $g=\bigl(g_{\pi(\sigma)}\colon \sigma\in \orangeO_{k}\bigr)$, where $g_{\pi(\sigma)}=g|_{\pi(\sigma)}\in\RR[x_{1},\dots,x_{i}]$. 
		Let $R=\RR[x_{1},\dots,x_{k}]$, and consider the map 
		\begin{align*}
			\varphi\colon (S^{r}(\projectedorange)\otimes_{\RR}\RR[x_{i+1},\dots,x_{k}])_{\leq d} & \to\bigoplus_{\sigma\in\orangeO_{k}} R,\\
			\left(g,h\right) & \mapsto\bigl(hg_{\pi(\sigma)}\colon \sigma\in\orangeO_{k}\bigr). 
		\end{align*}
		Notice that $S_c^r(\orangeO)\subseteq \bigoplus_{\sigma\in\orangeO_k}R$. 
		We want to show that $\varphi$ is an isomorphishm of vector spaces and $\img\varphi= S_d^r(\orangeO)$.
		
		It is clear that $\varphi$ is $\R$-linear and it is injective because $S^r(\projectedorange)\subseteq \R[x_1,\dots,x_i]$, so none of the variables $x_t$ for $t>i$ is involved in the polynomials $g_\pi(\sigma)$. 
		First we show that $\img\varphi\subseteq S^{r}_{d}(\orangeO)$. 
		
		Let $g\in S^{r}(\projectedorange)$. 
		Then, for any $h\in\RR[x_{i+1},\dots, x_{k}]$, and any pair of adjacent faces $\sigma,\sigma'\in\orangeO_k$ such that $\sigma\cap\sigma'=\varepsilon\in\orangeO_k$, the cofactor criterion implies $hg_{\pi(\sigma)}-hg_{\pi(\sigma')} = hc_{\pi(\varepsilon)}\cdot \ell_{\pi(\varepsilon)}^{r+1}$, 
		for some polynomial $c_{\pi(\varepsilon)}\in\RR[x_1,\dots, x_i]$, where $\ell_{\pi(\varepsilon)}$ is the linear polynomial vanishing on $\pi(\varepsilon)$.	
		Notice that by construction, the $(k-1)$-face $\varepsilon$ contains the medial simplex $\tau$ of $\orangeO$. 
		Consequently, the linear polynomial $\ell_{\varepsilon}$ vanishing on $\varepsilon$ is in $\RR[x_{1},\dots,x_{i}]$, and hence $\ell_{\pi(\varepsilon)}=\ell_{\varepsilon}$.
		Hence, $\varphi(g,h)\in S^{r}(\orangeO)$.
		By filtering on the degree
		we have that $\deg \varphi(g,h)\leq d$.
		This shows that $\img(\varphi)\subseteq S^{r}_{d}(\orangeO)$.
		
		We now prove that $\mathrm{im}\varphi\supseteq S^{r}_{d}(\orangeO)$. 
		Let $f=(f_\sigma\colon \sigma\in\orangeO_k)\in S^{r}_{d}(\orangeO)$, where $f|_\sigma=f_\sigma\in R$ and $\deg f_{\sigma}\leq d$.
		Notice that we can rewrite each $f_\sigma$ as a polynomial in $x_{i+1},\dots,x_k$ with coefficients in $\R[x_1,\dots,x_k]$.
		More precisely, we have
		\begin{equation*}
			f_{\sigma}=\sum_{{\bm j}\in \Z_{\geq 0}^{k-i},\;
				|\bm j|\leq d}a_{\bm j,\sigma}\,\bm{y}^{\bm j},\quad\text{for $a_{\bm j,\sigma}\in\RR[x_{1},\dots,x_{i}]$},
		\end{equation*}
		where $\bm y=(x_{i+1},\dots, x_k)$, and for a tuple of non-negative integers $\bm j=(j_{i+s})_{s=1}^{k-i}$
		we write $|\bm j|=\sum_{s=1}^{k-i}j_s$ and $\bm y^{\bm j}=x_{i+1}^{j_{i+1}}\cdots x_{k}^{j_{k}}$.
		Then, if $a_{\bm j}=\bigl(a_{\bm j,\sigma}\colon\sigma\in\orangeO_{k}\bigr)$, we can write
		\begin{equation}\label{eq:sumSplinef}
			f
			=
			\sum_{{\bm j}\in \Z_{\geq 0}^{k-i},\;
				|\bm j|\leq d}
			{a}_{\bm j}\,\bm{y}^{\bm j},
		\end{equation}
		where $\deg {a}_{\bm j}=\max\{\deg a_{\bm j,\sigma} \colon\sigma\in\orangeO_{k}\}$.
		We prove that $a_{\bm j}\in S^{r}(\projectedorange)$ for every tuple $\bm j\in\Z_{\geq 0}$ in \eqref{eq:sumSplinef} as follows. 
		Notice that for adjacent faces $\sigma$ and $\sigma'$ as above, we have that $f_{\sigma}-f_{\sigma'}=c_{\varepsilon}\cdot \ell_{\varepsilon}^{r+1}$, and we can write
		\begin{equation*}
			c_{\varepsilon}=\sum_{{\bm j}\in \Z_{\geq 0}^{k-i},\; |\bm j|\leq d}b_{\bm j,\varepsilon}\cdot\bm{\bm y}^{\bm j},\quad\text{where } b_{\bm j,\varepsilon}\in\RR[x_{1},\dots,x_{i}].
		\end{equation*}
		Thus, 
		\[
		f_{\sigma}-f_{\sigma'}
		=
		\sum_{{\bm j}\in \Z_{\geq 0}^{k-i},\; |\bm j|\leq d}(b_{\bm j,\varepsilon}\ell_{\varepsilon}^{r+1})\cdot {\bm y}^{\bm j}
		=
		\sum_{{\bm j}\in \Z_{\geq 0}^{k-i},\; |\bm j|\leq d} (a_{\bm j,\sigma}-a_{\bm j,\sigma'})\bm{y}^{\bm j}. 
		\]
		Hence, for every $\bm j\in\Z_{\geq 0}$, we have $a_{\bm j,\sigma}-a_{\bm j,\sigma'}=b_{\bm j,\varepsilon}\ell_{\varepsilon}^{r+1}$, which implies that
		$a_{\bm j}\in S^{r}(\projectedorange)$. 
		Note that $\deg f_\sigma\leq d$, so $\deg a_{\bm j}+ |\bm j|\leq d$. 
		This shows that $\img \varphi\supseteq S^{r}_{d}(\orangeO)$, and it completes the proof.
	\end{proof}
	One immediate consequence of Proposition \ref{Prop:tensor_product_structure_on_oranges} is that the algebraic structure of $S^{r}_{d}(\orangeO)$ only depends on that of $S^{r}(\projectedorange)$ and the $(k-i)$-subspace in which the medial simplex $\tau$ lies. Note that the projection map $\pi$ only depends on the $(k-i)$-subspace containing $\tau$, we have the following corollary. 
	\begin{corollary}\label{standard_orange}
		Let $\orangeO$ and $\orangeO'$ be two $(k,i)$-oranges with medial simplices $\tau$ and $\tau'$, respectively.
		If $\tau,\tau'$ lie in the same $(k-i)$-subspace and the projected oranges $\projectedorange=\projectedorange'$, then for any degree $d$,
		\begin{equation*}
			S^{r}_{d}(\orangeO)\simeq S^{r}_{d}(\orangeO').
		\end{equation*}
		In particular,
		\begin{equation*}
			\dim S^{r}_{d}(\orangeO)=\dim S^{r}_{d}(\orangeO').
		\end{equation*}
	\end{corollary}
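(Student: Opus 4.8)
The plan is to deduce the corollary directly from Proposition~\ref{Prop:tensor_product_structure_on_oranges}. First I would reduce to the case where the common $(k-i)$-subspace containing $\tau$ and $\tau'$ is the coordinate subspace $\{x_{1}=\cdots=x_{i}=0\}$ fixed in Section~\ref{section:cofactor_on_oranges}. This reduction is harmless: a linear change of coordinates of $\RR^{k}$ carries $\orangeO$ and $\orangeO'$ to $(k,i)$-oranges, induces a degree- and $C^{r}$-smoothness-preserving vector space isomorphism between the spline space on an orange and that on its image (precomposition with a linear bijection), and can be chosen so that the common subspace is sent to $\{x_{1}=\cdots=x_{i}=0\}$. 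Since $\projectedorange$ and $\projectedorange'$ are images under the \emph{same} projection before and after this change of coordinates, the hypothesis $\projectedorange=\projectedorange'$ is preserved. After this normalization, the projection $\pi$ of~\eqref{eq:projection} is literally one and the same map for $\orangeO$ and for $\orangeO'$, and both oranges satisfy the standing assumptions of Section~\ref{section:cofactor_on_oranges}.

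Next I would apply Proposition~\ref{Prop:tensor_product_structure_on_oranges} twice. For the orange $\orangeO$ it gives, for each degree $d$,
\[
S^{r}_{d}(\orangeO)\simeq\bigl(S^{r}(\projectedorange)\otimes_{\RR}\RR[x_{i+1},\dots,x_{k}]\bigr)_{\leq d},
\]
while applied to $\orangeO'$ it gives
\[
S^{r}_{d}(\orangeO')\simeq\bigl(S^{r}(\projectedorange')\otimes_{\RR}\RR[x_{i+1},\dots,x_{k}]\bigr)_{\leq d}.
\]
Because $\projectedorange=\projectedorange'$ by hypothesis, the right-hand sides of these two displays are the same vector space. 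Composing the first isomorphism with the inverse of the second then yields an isomorphism $S^{r}_{d}(\orangeO)\simeq S^{r}_{d}(\orangeO')$ of vector spaces, and taking dimensions gives the stated equality.

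There is essentially no substantive obstacle here, since all the real work is already contained in Proposition~\ref{Prop:tensor_product_structure_on_oranges}. The only point requiring a little care is the normalization in the first paragraph: verifying that a linear change of coordinates on $\RR^{k}$ induces an isomorphism of the relevant spline spaces and does not disturb the equality $\projectedorange=\projectedorange'$. Alternatively, one can avoid any reduction by observing that the proof of Proposition~\ref{Prop:tensor_product_structure_on_oranges} only uses that $\tau$ lies in a fixed $(k-i)$-dimensional subspace $U$ with $\pi$ a linear projection whose kernel is $U$; hence it applies verbatim to $\orangeO$ and $\orangeO'$ with their common subspace and a common $\pi$, and the argument of the second paragraph goes through unchanged.
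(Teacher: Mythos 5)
Your proposal is correct and follows essentially the same route as the paper: apply Proposition~\ref{Prop:tensor_product_structure_on_oranges} to both oranges and observe that, since $\projectedorange=\projectedorange'$, both $S^{r}_{d}(\orangeO)$ and $S^{r}_{d}(\orangeO')$ are isomorphic to the same space $(S^{r}(\projectedorange)\otimes_{\RR}\RR[x_{i+1},\dots,x_{k}])_{\leq d}$. The paper leaves the coordinate normalization implicit (it is a standing convention of Section~\ref{section:cofactor_on_oranges}), so your explicit first paragraph is just a careful spelling-out of the same argument.
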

	\begin{proof}
		By Proposition \ref{Prop:tensor_product_structure_on_oranges} it is clear that both $S^{r}_{d}(\orangeO)$ and $S^{r}_{d}(\orangeO')$ are isomorphic to $(S^{r}(\projectedorange)\otimes_{\RR}\RR[x_{i+1},\dots,x_{k}])_{\leq d}$. Therefore, they are isomorphic and have the same dimension.
	\end{proof}
	\section{Hilbert series}\label{sec:HilbertSeries}
	In this section we prove the main result in this paper which is Theorem \ref{theor:sumdim}.
	
	Note that for any order simplicial complex $\Delta$ and any order of continuity $r\geq 0$, we can see $S^{r}(\Delta)$ as a filtered vector space for the sequence of spline spaces
	$\{0\}\subseteq S_0^r(\Delta)\subseteq S_1^r(\Delta)\subseteq \cdots$. 
	The \emph{Hilbert series} $\Hilb\bigl(S^{r}(\Delta), t \bigr)$ of $S^{r}(\Delta)$ is by definition
	\begin{equation*}
		\Hilb\bigl(S^{r}(\Delta), t \bigr)=\sum_{d=0}^{\infty}\dim S^{r}_{d}(\Delta)\,t^{d}.
	\end{equation*} 
	Following the notation in previous sections, if $0\leq i\leq k$ we denote by $\orangeO$ a $(k,i)$-orange and by $\projectedorange$ the image of $\orangeO$ by the projection $\pi\colon\R^k\to \R^i$ in \eqref{eq:projection}.
	The following result is a consequence of Proposition \ref{Prop:tensor_product_structure_on_oranges} and relates the Hilbert series of the spline spaces $S^{r}(\orangeO)$ and $S^{r}(\projectedorange)$.
	\begin{corollary}\label{Prop:Prop_Hilbert_series}
		If $\orangeO$ is a $(k,i)$-orange and $\projectedorange=\pi(\orangeO)$ as defined in \eqref{eq:projOrange}, then 
		\begin{equation}\label{eqn:Hilb_series}
			\Hilb\bigl(S^{r}(\orangeO),t\bigr)
			=
			\frac{1}{(1-t)^{k-i}}\Hilb\bigl(S^{r}(\projectedorange),t\bigr).
		\end{equation}
	\end{corollary}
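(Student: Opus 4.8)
The plan is to read off the identity from the filtered vector space isomorphism of Proposition~\ref{Prop:tensor_product_structure_on_oranges}, being careful to distinguish the \emph{filtered} Hilbert series $\sum_d\dim S^r_d(\Delta)\,t^d$ used here from the associated graded one. The only additional ingredient is the elementary fact that $\RR[x_{i+1},\dots,x_k]$, filtered by total degree, satisfies $\dim\RR[x_{i+1},\dots,x_k]_{\leq d}=\binom{d+k-i}{k-i}$, so that $\Hilb\bigl(\RR[x_{i+1},\dots,x_k],t\bigr)=(1-t)^{-(k-i+1)}$. We may assume $k>i$, since for $k=i$ the projection $\pi$ is the identity, $\projectedorange=\orangeO$, and \eqref{eqn:Hilb_series} is trivial.

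First I would fix a basis $\{\phi_m\}$ of $S^r(\projectedorange)$ adapted to the filtration, obtained by choosing a basis of $S^r_0(\projectedorange)$ and successively extending it through $S^r_1(\projectedorange)\subseteq S^r_2(\projectedorange)\subseteq\cdots$. Then each $\phi_m$ has a well-defined degree $e_m$ and $\#\{m:e_m=a\}=\dim S^r_a(\projectedorange)-\dim S^r_{a-1}(\projectedorange)$, with the convention $\dim S^r_{-1}(\projectedorange)=0$. Together with the monomial basis $\{\bm y^{\bm j}:\bm j\in\Z_{\geq 0}^{k-i}\}$ of $\RR[x_{i+1},\dots,x_k]$, the tensors $\phi_m\otimes\bm y^{\bm j}$ form a basis of $S^r(\projectedorange)\otimes_\RR\RR[x_{i+1},\dots,x_k]$, and by the description of the filtration in Proposition~\ref{Prop:tensor_product_structure_on_oranges} those with $e_m+|\bm j|\leq d$ form a basis of the subspace $\bigl(S^r(\projectedorange)\otimes_\RR\RR[x_{i+1},\dots,x_k]\bigr)_{\leq d}$ (to see this, expand a generator $g\otimes h$ with $\deg g+\deg h\leq d$ in these bases). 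Applying the isomorphism $\varphi$ of that proposition and the stars-and-bars count $\#\{\bm j\in\Z_{\geq 0}^{k-i}:|\bm j|=b\}=\binom{b+k-i-1}{k-i-1}$ gives
\[
\dim S^r_d(\orangeO)=\sum_{\substack{a,b\geq 0\\ a+b\leq d}}\bigl(\dim S^r_a(\projectedorange)-\dim S^r_{a-1}(\projectedorange)\bigr)\binom{b+k-i-1}{k-i-1}.
\]

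Next I would convert this convolution into a power series identity. Recall that multiplying a power series by $(1-t)^{-1}$ replaces its coefficient sequence by the sequence of partial sums, and note that $\sum_a\bigl(\dim S^r_a(\projectedorange)-\dim S^r_{a-1}(\projectedorange)\bigr)t^a=(1-t)\,\Hilb\bigl(S^r(\projectedorange),t\bigr)$ while $\sum_b\binom{b+k-i-1}{k-i-1}t^b=(1-t)^{-(k-i)}$. Since the right-hand side of the displayed formula is exactly the $d$-th partial sum of the coefficient sequence of the product $(1-t)\,\Hilb\bigl(S^r(\projectedorange),t\bigr)\cdot(1-t)^{-(k-i)}$, summing over $d$ yields
\[
\Hilb\bigl(S^r(\orangeO),t\bigr)=\frac{1}{1-t}\cdot(1-t)\,\Hilb\bigl(S^r(\projectedorange),t\bigr)\cdot\frac{1}{(1-t)^{k-i}}=\frac{1}{(1-t)^{k-i}}\,\Hilb\bigl(S^r(\projectedorange),t\bigr),
\]
which is \eqref{eqn:Hilb_series}.

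The computation is pure bookkeeping; the one point to be careful about --- and the one I would flag as the main obstacle --- is resisting the temptation to assert that the Hilbert series is simply multiplicative under $\otimes_\RR$. Multiplicativity holds for \emph{graded} tensor products, but not for the convolution filtration that governs $S^r(\orangeO)$; the discrepancy is exactly the single factor $(1-t)$ that appears above, equivalently the reason the exponent in \eqref{eqn:Hilb_series} is $k-i$ rather than $k-i+1$. Keeping the filtered-versus-graded bookkeeping straight through the partial-sum / $(1-t)^{-1}$ dictionary is the only genuine subtlety.
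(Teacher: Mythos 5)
Your argument is correct, but it reaches \eqref{eqn:Hilb_series} by a genuinely different route than the paper. The paper peels off the variables $x_{i+1},\dots,x_k$ one at a time: for a single variable it considers the surjection $\phi\colon \bigl(S^{r}(\projectedorange)\otimes_\R\R[x_{i+1}]\bigr)_{\leq d}\to S^{r}_d(\projectedorange)$ given by $x_{i+1}\mapsto 0$, identifies $\ker\phi$ with the degree-$(d-1)$ filtered piece via multiplication by $x_{i+1}$, deduces $\Hilb\bigl(S^{r}(\projectedorange)\otimes_\R\R[x_{i+1}],t\bigr)(1-t)=\Hilb\bigl(S^{r}(\projectedorange),t\bigr)$, and iterates $k-i$ times. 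You instead compute $\dim\bigl(S^{r}(\projectedorange)\otimes_\R\R[x_{i+1},\dots,x_k]\bigr)_{\leq d}$ in one shot by exhibiting the adapted basis $\{\phi_m\otimes\bm y^{\bm j}\colon e_m+|\bm j|\leq d\}$ and then repackaging the resulting double sum via the partial-sum/$(1-t)^{-1}$ dictionary; your justification that these tensors span the filtered piece (expand a pure tensor $g\otimes h$ with $\deg g+\deg h\leq d$ in the two bases, noting $\deg\phi_m=e_m$ exactly) is sound, and the generating-function bookkeeping is right. What the paper's induction buys is basis-freeness and a clean one-variable statement as the engine; what your version buys is an explicit basis of $S^{r}_d(\orangeO)$ through $\varphi$, and the observation that your displayed convolution formula is, after summing the inner binomial and an Abel summation, already formula \eqref{eq:main_result_formula} of Theorem~\ref{theor:sumdim}, so the paper's subsequent derivation of the main theorem from the corollary comes for free. (One cosmetic remark: the identity $\Hilb\bigl(\R[x_{i+1},\dots,x_k],t\bigr)=(1-t)^{-(k-i+1)}$ that you announce as the ``only additional ingredient'' is never actually used; what you use is the graded count $\#\{\bm j\colon|\bm j|=b\}=\binom{b+k-i-1}{k-i-1}$.) Both arguments rest entirely on Proposition~\ref{Prop:tensor_product_structure_on_oranges}.
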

	\begin{proof}
		By Proposition \ref{Prop:tensor_product_structure_on_oranges}, we know that $S^{r}(\orangeO)=S^{r}(\projectedorange)\otimes_{\RR}\RR[x_{i+1},\dots,x_{k}]$ and $S^{r}_{d}(\orangeO)\cong (S^{r}(\projectedorange)\otimes_{\RR}\RR[x_{i+1},\dots,x_{k}])_{\leq d}$ for every $d\geq 0$. 
		We will prove \eqref{eqn:Hilb_series} by induction on the variables $x_{i+j}$ for $1\leq j\leq k-i$.
		Define $\phi\colon \bigl(S^{r}(\projectedorange)\otimes_\R\R[x_{i+1}]\bigr)_d\to S^{r}_d(\projectedorange)$ by taking $\phi\bigl(x_{i+1}\bigr)=0$.
		Then, $\ker_d(\phi)= \bigl(S^{r}(\projectedorange)\otimes_\R\R[x_{i+1}]\bigr)_{d-1}\cdot x_{i+1}\cong \bigl(S^{r}(\projectedorange)\otimes_\R\R[x_{i+1}]\bigr)_{d-1}$.
		This holds for every degree $d\geq 0$, and therefore it implies \begin{equation}\label{eq:corproof1}
			\Hilb(\ker(\phi),t)=\Hilb\bigl(S^{r}(\projectedorange,t)\otimes_\R\R[x_{i+1}],t\bigr)\cdot t,
		\end{equation}	
		where $\ker(\phi)$ is the filtered vector of $\ker_d(\phi)$.
		But $\phi$ is surjective and linear, hence 
		$\dim \ker_d(\phi)+\dim S^r(\projectedorange)_d=\dim \bigl(S^r(\projectedorange)\otimes_\R\R[x_{i+1}]\bigr)_{d}$, and this together with \eqref{eq:corproof1} yields
		\[
		\Hilb\bigl(S^r(\projectedorange)\otimes_\R\R[x_{i+1}],t \bigr)(1-t)=\Hilb(S^r(\projectedorange)).
		\]
		Since $\RR[x_{i+1},\dots,x_{k}]\cong \RR[x_{i+1}]\otimes_{\RR}\dots\otimes_{\RR}\RR[x_{k}]$, and $S^{r}(\orangeO)=S^{r}(\projectedorange)\otimes_{\RR}\RR[x_{i+1},\dots,x_{k}]$, we easily see by induction that $\Hilb\bigl(S^{r}(\orangeO),t\bigr)(1-t)^{k-i}=\Hilb\bigl(S^{r}(\projectedorange),t\bigr)$, which proves \eqref{eqn:Hilb_series}. 
	\end{proof}
	Theorem \ref{theor:sumdim} can be proved from Corollary \ref{Prop:Prop_Hilbert_series} as follows. 
	\begin{proof}[Proof Theorem \ref{theor:sumdim}]
		Notice that we can rewrite 
		\begin{equation*}
			\frac{1}{(1-t)^{k-i}}=\sum_{j=0}^{\infty}\binom{k-i-1+j}{j}t^{j}.
		\end{equation*}
		Thus, by Corollary \ref{Prop:Prop_Hilbert_series}, we have
		\begin{equation*}
			\sum_{d=0}^{\infty}\dim S_{d}^{r}(\orangeO)t^{d}=\left(\sum_{j=0}^{\infty}\binom{k-i-1+j}{j}t^{j}\right)\cdot\left(\sum_{j=0}^{\infty}\dim S_{j}^{r}(\projectedorange)t^{j}\right),
		\end{equation*}
		which implies 
		\begin{align*}
			\dim S_{d}^{r}(\orangeO)
			&=
			\sum_{\substack{j+l=d\\j,\,l\geq 0}}\binom{k-(i+1)+l}{l}\dim S^{r}_j(\projectedorange)\\
			&=\sum_{j=0}^d\binom{d+k-i-j-1}{k-i-1}\dim S^{r}_j(\projectedorange),
		\end{align*}
		for every degree $d\geq 0$.
	\end{proof}
	
	\section{\bb ~techniques}\label{sec:BB}
	The idea is to use Corollary~\ref{standard_orange} to transform $\orangeO$ into a special $\orangeO'$ constructed from the projection $\projectedorange$, so that its geometric structure allows to ``lift"  the \bb~basis on the projected orange $\projectedorange$ to a \bb~basis in $\orangeO'$. The main advantage of \bb~techniques is the tool called {\it domain points} that essentially replaces basis functions with actual points located in specified positions in each simplex. We assume familiarity of the reader with basic \bb~concepts, and refer to~\cite{LS07} for a comprehensive treatment of bivariate and trivariate case as well as to a survey paper~\cite{AMFO16}.
	
	Recall that by assumption the medial face $\tau$ of $\orangeO$ is in the plane $x_1=\cdots=x_i=0$. Without loss of generality assume that the first vertex $v_0$ of $\tau$ is at the origin $O$, and the remaining vertices are $v_1,\dots v_{k-i}$. We also assume that $\projectedorange$ is embedded in the subspace $x_{i+1}=\dots=x_{k}=0$, and its only interior vertex is located at $O$. We construct $\orangeO'$ as follows:
	\begin{equation*}
		\orangeO':=\projectedorange*\tau=\bigl\{\omega*v_{1}*\dots*v_{k-i}\colon \omega\in\projectedorange\bigr\}.
	\end{equation*}
	It is easy to verify that $\orangeO'$ is also a $(k,i)$-orange. It has the same medial simplex $\tau$ as $\orangeO$, and its projected orange $\projectedorange'$ equals $\projectedorange$.

	The set of all domain points in $\tau$ for a polynomial of degree at most $d$ in $(k-i)$ variables is given by:
	\begin{equation*}
		{\mathcal D}^\tau:=\bigl\{\xi^\tau_{j_0,j_1\dots,j_{k-i}}=(j_1v_1+\cdots +j_{k-i}v_{j_{k-i}})/d\colon j_0+j_1+\cdots+j_{k-i}=d\bigr\}.
	\end{equation*}
	Next we define a family of scaled versions of $\projectedorange$ as follows
	\[\projectedorange_j:=(j/d)\,\projectedorange, \;\text{for } j=0,\dots,d,\]
	where if $j=0$, the projected orange $\projectedorange$ scales to the origin. 
	Let $\omega$ be a maximal simplex in $\projectedorange$ with the first vertex $u_0$ at the origin $O$, and the remaining vertices $u_1,\dots u_i$. 
	We also define a a family of scaled versions of $\omega$ as follows
	\[\omega_j:=(j/d)\,\omega, \;\text{for } j=0,\dots,d,\]
	where if $j=0$, the simplex $\omega$ scales to the origin.
	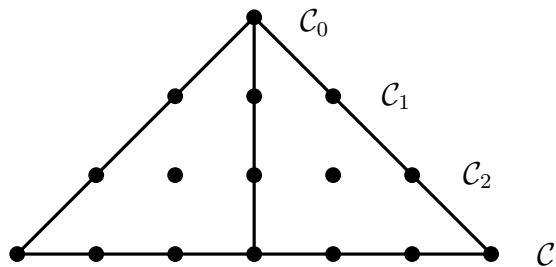
\begin{figure}[h]
		\begin{center}
			\begin{tikzpicture}[scale=0.35]
				\draw [very thick] (1,0) -- (10,0) -- (1,9) -- cycle;
				\draw [very thick] (1,9) -- (-8,0) -- (1,0);
				\fill (1,0) circle (0.3);\fill (10,0) circle (0.3);\fill (1,9) circle (0.3);\fill (4,0) circle (0.3);\fill (7,0) circle (0.3);\fill (1,6) circle (0.3);\fill (1,3) circle (0.3);\fill (4,3) circle (0.3);\fill (7,3) circle (0.3);\fill (4,6) circle (0.3);
				\fill (-2,0) circle (0.3);\fill (-5,0) circle (0.3);
				\fill (-8,0) circle (0.3);\fill (-5,3) circle (0.3);
				\fill (-2,3) circle (0.3);\fill (-2,6) circle (0.3);
				\draw (12.1,0) node {$\projectedorange$}; 
				\draw (9.5,3) node {$\projectedorange_2$}; 
				\draw (6.4,6) node {$\projectedorange_1$}; 
				\draw (3.3,8.8) node {$\projectedorange_0$}; 
			\end{tikzpicture}
			\caption{Four parallel layers of domain points forming $\mathcal{D}'$ for $S_3^1$ on a $(2,1)$-orange with the associated spaces of univariate splines $S_j^r(\projectedorange_j)$, for $j=0,1,2,3$, and $\projectedorange=[-1,0]\cup[0,1]$.}\label{twotriangles}
		\end{center}
	\end{figure}
	The set of all domain points in $\omega_j$ for a polynomial of degree at most $j$ in $i$ variables is given by:
	\begin{multline*}
		{\mathcal D}_j^\omega:=\bigl\{\xi^{\omega,j}_{\ell_0,\ell_1\dots,\ell_i}=\frac{j}{d}(\ell_1u_1+\cdots +\ell_iu_i)/j=(\ell_1u_1+\cdots +\ell_iu_i)/d\colon\\ \ell_0+\ell_1+\cdots+\ell_i=j\bigr\}.
	\end{multline*}
	The purpose of the following lemma
	is to show that the domain points in $\orangeO'$ can be split into parallel layers orthogonal to $\tau$, and, thus, the smoothness conditions for a spline in $S_d^r(\orangeO')$ can be also split into independent blocks.
	\begin{lemma}\label{detset}
		Let $\mathcal D'$ be the set of all domain points for a spline in $S_d^0(\orangeO')$. 
		Then $\mathcal D'$ can be lifted from the domain points in $\projectedorange$ as follows:
		\[\mathcal {D}'=\bigcup_{j=0}^d\bigl\{\cup_{\omega\in \projectedorange }\mathcal{D}^\omega_j+\xi_{j,j_1,j_2,\dots,j_{k-i}}\colon \xi_{j,j_1,j_2,\dots,j_{k-i}}\in \mathcal{D}^\tau\bigr\}.\]
	\end{lemma}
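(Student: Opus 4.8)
The plan is to unwind the definition of $\mathcal{D}'$, namely the union over the maximal faces of $\orangeO'$ of their sets of degree-$d$ domain points, and to reorganize these points according to the direct-sum decomposition $\RR^k = W \oplus U$, where $W = \{x_{i+1} = \cdots = x_k = 0\}$ is the coordinate subspace spanned by $\projectedorange$ and $U = \{x_1 = \cdots = x_i = 0\}$ is the one spanned by $\tau$; here $W \cap U = \{O\}$ and $O = v_0 = u_0$ is the common vertex. First I would record that, by the construction $\orangeO' = \projectedorange * \tau$ together with Corollary~\ref{cor:maximal_faces_1to1}, every maximal face of $\orangeO'$ is $\sigma' = \omega * v_1 * \cdots * v_{k-i}$ for a unique maximal simplex $\omega \in \projectedorange$ with vertices $u_0 = O, u_1, \dots, u_i$, and that its degree-$d$ domain points are exactly the points $\tfrac1d(\beta_1 u_1 + \cdots + \beta_i u_i + \gamma_1 v_1 + \cdots + \gamma_{k-i} v_{k-i})$ with $\beta_0, \dots, \beta_i, \gamma_1, \dots, \gamma_{k-i} \in \Z_{\ge 0}$ summing to $d$ (where $\beta_0$ is the barycentric coordinate at $O$).

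For the inclusion ``$\subseteq$'', given such a domain point $\xi$ of $\sigma'$ I would set $j := \beta_0 + \cdots + \beta_i$, so $0 \le j \le d$ and $\gamma_1 + \cdots + \gamma_{k-i} = d - j$, and split $\xi = \xi_\omega + \xi_\tau$ along $W \oplus U$, with $\xi_\tau = \tfrac1d(\gamma_1 v_1 + \cdots + \gamma_{k-i} v_{k-i}) \in U$ and $\xi_\omega = \tfrac1d(\beta_1 u_1 + \cdots + \beta_i u_i) \in W$ (so that $\xi_\omega = \pi(\xi)$ under the identification $W \cong \RR^i$). Reading off indices then gives $\xi_\tau = \xi^\tau_{j, \gamma_1, \dots, \gamma_{k-i}} \in \mathcal{D}^\tau$ (its index sum is $j + (d - j) = d$) and $\xi_\omega = \xi^{\omega, j}_{\beta_0, \dots, \beta_i} \in \mathcal{D}_j^\omega$ (its index sum is $\beta_0 + \cdots + \beta_i = j$), so $\xi$ lies in the $j$-th layer $\mathcal{D}_j^\omega + \xi^\tau_{j, \gamma_1, \dots, \gamma_{k-i}}$ on the right-hand side.

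For the inclusion ``$\supseteq$'' I would reverse this: given $j \in \{0, \dots, d\}$, a maximal $\omega \in \projectedorange$, a point $\eta = \tfrac1d(\ell_1 u_1 + \cdots + \ell_i u_i) \in \mathcal{D}_j^\omega$ (indices summing to $j$), and $\zeta = \xi^\tau_{j, j_1, \dots, j_{k-i}} \in \mathcal{D}^\tau$ (indices summing to $d$, with first index $j$), the sum $\eta + \zeta$ has barycentric indices $\ell_0, \dots, \ell_i, j_1, \dots, j_{k-i}$ with total $j + (d - j) = d$, hence is a degree-$d$ domain point of the maximal face $\omega * v_1 * \cdots * v_{k-i}$ of $\orangeO'$ and thus lies in $\mathcal{D}'$; and enlarging the union from maximal $\omega$ to all $\omega \in \projectedorange$ adds nothing, since domain points of a face of $\omega$ are among those of $\omega$.

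All of the above is routine; the one point needing genuine care is the index/normalization bookkeeping — reconciling the single degree $d$ attached to $\mathcal{D}^\tau$, the layer index $j$ attached to $\mathcal{D}_j^\omega$ (whose points are already normalized so that $(\ell_1 u_1 + \cdots + \ell_i u_i)/d$ appears, i.e., the scaling factor $j/d$ of $\omega_j$ is absorbed), and the full degree-$d$ barycentric indexing on $\sigma'$ — together with checking that $\xi \mapsto (\xi_\omega, \xi_\tau)$ is precisely the $W \oplus U$ splitting, so that the decomposition is unique and the claimed set equality holds layer by layer. I expect that bookkeeping to be the main obstacle; everything else is a direct unwinding of the definitions.
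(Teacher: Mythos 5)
Your proposal is correct and follows essentially the same route as the paper: decompose each degree-$d$ domain point of a maximal face $\omega * \tau$ of $\orangeO'$ into its $\projectedorange$-component and its $\tau$-component, read off the layer index $j$ as the sum of the barycentric indices at $O, u_1, \dots, u_i$, and check both inclusions by the same index bookkeeping. The only (harmless) addition is your explicit remark that passing from maximal $\omega$ to all $\omega \in \projectedorange$ changes nothing, which the paper leaves implicit.
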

	\begin{proof} Let $\sigma$ be a maximal simplex in $\orangeO'$. Therefore, $\sigma=\omega*\tau$ for some maximal simplex $\omega\in\projectedorange$. Let $\eta_{m_0,\cdots,m_k}$, $m_0+\cdots m_k=d$ be an arbitrary domain point in $\mathcal{D}'\cap\sigma$. Without loss of generality, assume that the first vertex $u_0$ of $\omega$ is at the origin, the next $i$ vertices $u_1,\dots,u_i$ are in $\projectedorange$, and the last $(k-i)$ vertices $v_1,\dots, v_{k-i}$ are in $\tau$. 
		Then 
		\begin{align*}
			\eta_{m_0,\cdots,m_k}=&(m_1u_1+\cdots +m_iu_i+m_{i+1}v_1+\cdots +m_kv_{k-i})/d\\
			& =(m_1u_1+\cdots m_iu_i)/d+(m_{i+1}v_1+\cdots +m_kv_{k-i})/d\\
			&=:\xi^{\omega,j}_{m_0,m_1,\dots,m_i}+\xi^\tau_{j,m_{i+1},\dots, m_k},
		\end{align*}
		where for $j=d-m_{i+1}-\cdots-m_k$, we have $\xi^{\omega,j}_{m_0,m_1,\dots,m_i}\in \mathcal{D}^{\omega}_j$, and $\xi^\tau_{j,m_{i+1},\dots, m_k}\in\mathcal{D}^\tau$, since $j+m_{i+1}+\cdots m_{k}=d.$
		
		Conversely, for $0\leq j\leq d$, $\ell_0+\ell_1+\cdots+\ell_i=j$, and  $j+j_1+\cdots+j_{k-i}=d$, consider
		\begin{multline*}
			\xi^{\omega,j}_{\ell_0,\ell_1,\dots,\ell_i}+\xi^\tau_{j,j_1,\dots, j_{k-i}}=
			(\ell_1u_1+\cdots +\ell_iu_i)/d+(j_{1}v_1+\cdots +j_{k-1}v_{k-i})/d\\
			=\ell_1u_1+\cdots +\ell_iu_i+j_{1}v_1+\cdots +j_{k-1}v_{k-i})/d\;=\eta_{\ell_0,\ell_1,\dots,\ell_i,j_1,\dots,j_{k-i}},
		\end{multline*} 
		since $\ell_0+\ell_1+\cdots+\ell_i+j_1+\cdots+j_{k-i}=j+d-j=d$, and the simplex in $\orangeO'$ in $\RR^k$ has $(k+1)$ vertices $O,u_1,\cdots, u_i, v_1,\dots,v_{k-i}$.
	\end{proof}
	
	\begin{figure}[h]
		\begin{center}
			\begin{tikzpicture}[scale=0.6]
				\fill[fill=gray!20] (8,0.3) -- (10,1.6) -- (13.4,1.6);
				\draw [thin] (8,0.3) -- (10,1.6) -- (6.6,1.6) -- cycle;
				\fill[fill=gray!20] (8,0.3) -- (10,1.6) -- (6.6,1.6);
				
				\draw [thin] (9,2.6) -- (10,3.2) -- (11.6,3.2) -- cycle;
				\fill[fill=gray!20] (9,2.6) -- (10,3.2) -- (11.6,3.2);
				\draw [thin] (9,2.6) -- (10,3.2) -- (8.2,3.2) -- cycle;
				\fill[fill=gray!20] (9,2.6) -- (10,3.2) -- (8.2,3.2);
				\fill[fill=gray!20] (5,0) -- (15,0) -- (7,-2);
				\draw [thin] (8,0.3) -- (10,1.6) -- (13.4,1.6) -- cycle;
				\draw [very thick] (10,5) -- (7,-2);
				\draw [thick] (7,-2) -- (10,0);
				\draw [very thick] (5,0) -- (7,-2) -- (15,0); 
				\draw [thick] (10,0) -- (15,0) -- (10,5) -- cycle;
				\draw [thick] (10,5) -- (5,0) -- (10,0);
				\draw [dashed] [very thick] (5.65,-0.65) -- (12.2,-0.65);
				\fill (5.65,-0.65) circle (0.1);
				\fill (12.3,-0.65) circle (0.1);
				\fill (7.35,-0.65) circle (0.1);
				\fill (10.5,-0.65) circle (0.1);
				\draw [very thick]  (5.0,0) -- (15,0);
				\fill (5.0,0) circle (0.1);
				\fill (15.0,0) circle (0.1);
				\fill (6.7,0) circle (0.1);
				\fill (8.4,0) circle (0.1);
				\fill (11.7,0) circle (0.1);
				\fill (13.4,0) circle (0.1);
				\draw [dashed] [very thick] (6.3,-1.3) -- (9.65,-1.3);
				\fill (6.3,-1.3) circle (0.1);
				\fill (9.7,-1.3) circle (0.1);
				\fill (10,0) circle (0.15);
				\fill (9,-0.65) circle (0.15);
				\fill (8,-1.3) circle (0.15);
				\fill (7,-2) circle (0.15);
				\draw [dashed] [very thick] (7.3,1) -- (11,1); 
				\fill (7.2, 1) circle (0.1);
				\fill (11, 1) circle (0.1);
				\draw [dashed] [very thick] (6.6,1.6) -- (13.4,1.6); 
				\fill (6.6, 1.6) circle (0.1);
				\fill (8.4, 1.6) circle (0.1);
				\fill (11.7, 1.6) circle (0.1);
				\fill (13.5, 1.6) circle (0.1);
				\fill (9,1) circle (0.15);
				\fill (8,0.3) circle (0.15);
				\fill (10,1.6) circle (0.15);
				\draw [dashed] [very thick] (8.2,3.2) -- (11.7,3.2); 
				\fill (8.2,3.2) circle (0.1);
				\fill (11.8,3.2) circle (0.1);
				\fill (10,3.2) circle (0.15);
				\fill (9,2.6) circle (0.15);
				\fill (10,4.9) circle (0.15);

				\draw (21,0) node  [behind path, fill=gray!20, scale=0.7] {1 long solid segment with 7 points is $\projectedorange$}; 
				\draw (20.7,1.7) node  [behind path, fill=gray!20, scale=0.7] {2 medium dashed segments with 5 points are shifts of $\projectedorange_2$};
				\draw (19,3.5) node  [behind path, fill=gray!20, scale=0.7] {3 short dashed segments with 3 points each are shifts of $\projectedorange_1$}; 
				\draw (10.4,5.5) node {$v_1$}; 
				\draw (15.5,5.5) node  [behind path, fill=gray!20, scale=0.7] {4 points on $[v_1,v_2]$ are shifts of $\projectedorange_0$}; 
				\draw (6.5,-2.5) node {$v_2$}; 
			\end{tikzpicture}
			\caption{Domain points for $S^0_3(\orangeO')$: large dots are the domain points in $\tau$; small dots are the domain points on shifts of $\projectedorange_j$, $j=0,\dots,3$.}
			\label{twotetra}
		\end{center}
	\end{figure}
	Before proving our next result, we demonstrate the approach on two examples. The first one is a $(2,1)$-orange $\orangeO'$ depicted in~Fig.~\ref{twotriangles}. The smoothness conditions for a spline in $S_3^r(\orangeO')$ are easily seen to be the same as for a univariate spline of smoothness $r$ on the~$\projectedorange$: a partition of an interval with one node. Moreover,
	$\dim S^{r}_{3}(\orangeO')=\sum_{j=0}^3 \dim S^{r}_j(\projectedorange)$. 
	Our next example is a $(3,1)$-orange $\orangeO'$ depicted in Fig.~\ref{twotetra}. Here again, the smoothness conditions for a spline in $S_3^r(\orangeO')$ are the same as for a univariate spline of smoothness $r$ on the~$\projectedorange$: a partition of an interval with one node. Moreover, 
	\begin{equation*}
		\dim S^{r}_{3}(\orangeO')=\sum_{j=0}^3(4-j) \dim S^{r}_j(\projectedorange).
	\end{equation*}

	\begin{theorem}\label{mds} For each $0\leq j\leq d$, let ${\mathcal{M}}_j$ be a minimal determining set for a spline in $S_j^r( \projectedorange_j)$. Then the minimal determining set ${\mathcal M}'$ for a spline in $S_d^r(\orangeO')$ is given by
		\[{\mathcal M}'=\bigcup_{j=0}^d\bigl\{{\mathcal{M}}_j+\xi_{j,j_1,\dots,j_{k-i}}\colon \xi_{j,j_1,j_2,\dots,j_{k-i}}\in \mathcal{D}^\tau\bigr\}.\]
		Moreover, the dimension of $S_j^r( \projectedorange_j)$ can be computed by adding the corresponding cardinalities as follows:
		\begin{equation}\label{card}\sum_{j=0}^d \binom{d+k-j-i-1}{k-i-1} |\mathcal{M}_j|. \end{equation}
	\end{theorem}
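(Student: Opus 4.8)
The plan is to transfer the \bb~smoothness analysis from $\orangeO'$ to $\projectedorange$ layer by layer. Writing a spline in $S_d^0(\orangeO')$ in \bb~form on the maximal simplices $\sigma=\omega*\tau$, $\omega\in\projectedorange$, I will show that the $C^r$ smoothness conditions across the interior facets never couple \bb~coefficients sitting in different layers of the decomposition of Lemma~\ref{detset}, and that the conditions restricted to a single layer of height $j$ are exactly the $C^r$ conditions defining $S_j^r(\projectedorange_j)$. Granting this, $S_d^r(\orangeO')$ decomposes as a direct sum of copies of the spaces $S_j^r(\projectedorange_j)$, one copy for each point $\xi^\tau_{j,j_1,\dots,j_{k-i}}\in\mathcal{D}^\tau$, and both the description of $\mathcal{M}'$ and the cardinality count follow at once.

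First I would record the geometry of the facets. Two adjacent maximal simplices of $\orangeO'$ are $\sigma=\omega*\tau$ and $\sigma'=\omega'*\tau$, with $\omega,\omega'$ adjacent maximal simplices of the vertex star $\projectedorange$ sharing a facet $\theta=\omega\cap\omega'$; then $\sigma\cap\sigma'=\theta*\tau$ contains $\tau$, and exactly as in the proof of Proposition~\ref{Prop:tensor_product_structure_on_oranges} the linear form $\ell_{\theta*\tau}$ involves only $x_1,\dots,x_i$. Order the vertices of $\sigma$ as $u_i,O,u_1,\dots,u_{i-1},v_1,\dots,v_{k-i}$, where $u_i$ is the vertex of $\omega$ not on $\theta$, and do the same for $\sigma'$ with $u_i'$ in place of $u_i$. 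The key point is that $u_i'$ lies in the coordinate subspace $x_{i+1}=\dots=x_k=0$, which already contains $O,u_1,\dots,u_i$; hence the barycentric coordinates of $u_i'$ with respect to $\sigma$ vanish on $v_1,\dots,v_{k-i}$, and the remaining ones coincide with the barycentric coordinates of $u_i'$ with respect to $\omega$ inside $\projectedorange$.

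Next I would feed this into the standard \bb~smoothness relations across $\theta*\tau$. Writing a \bb~coefficient of $f_\sigma$ as $c_{(m_{u_i},m_O,m_{u_1},\dots,m_{u_{i-1}},m_{v_1},\dots,m_{v_{k-i}})}$, each $C^r$ relation expresses a coefficient of $f_{\sigma'}$ with transverse index $\le r$ as a linear combination of coefficients of $f_\sigma$ obtained by redistributing that index among the positions where the barycentric coordinates of $u_i'$ are nonzero; by the previous paragraph those positions lie among $u_i,O,u_1,\dots,u_{i-1}$ only, so the block $(m_{v_1},\dots,m_{v_{k-i}})$ is unchanged by every relation. Thus all coefficients linked by a smoothness relation share the same value $j:=d-(m_{v_1}+\dots+m_{v_{k-i}})=m_{u_i}+m_O+m_{u_1}+\dots+m_{u_{i-1}}$, and after fixing $(m_{v_1},\dots,m_{v_{k-i}})=(j_1,\dots,j_{k-i})$ the surviving multi-index $(m_{u_i},m_O,m_{u_1},\dots,m_{u_{i-1}})$ ranges over the \bb~index set of degree $j$ on the $i$-simplex $\omega$, while the relation is precisely the degree-$j$ $C^r$ smoothness relation across $\theta=\omega\cap\omega'$; since the construction of spline spaces is invariant under affine changes of coordinates, this is a defining relation of $S_j^r(\projectedorange_j)$. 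By Lemma~\ref{detset} the coefficients in question are exactly those sitting at $\bigcup_{\omega\in\projectedorange}\mathcal{D}_j^\omega$ shifted by $\xi^\tau_{j,j_1,\dots,j_{k-i}}\in\mathcal{D}^\tau$. Consequently the \bb~coefficients of a spline in $S_d^r(\orangeO')$ split, over the points of $\mathcal{D}^\tau$, into independent families, the one attached to $\xi^\tau_{j,j_1,\dots,j_{k-i}}$ being a copy of the \bb~coefficients of a spline in $S_j^r(\projectedorange_j)$. Therefore a subset of $\mathcal{D}'$ is a minimal determining set for $S_d^r(\orangeO')$ if and only if its intersection with every layer is a minimal determining set for the corresponding copy of $S_j^r(\projectedorange_j)$; placing $\mathcal{M}_j$ in each layer of height $j$ yields the stated $\mathcal{M}'$. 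Finally, for each $j$ the number of points $\xi^\tau_{j,j_1,\dots,j_{k-i}}\in\mathcal{D}^\tau$ equals the number of non-negative integer solutions of $j_1+\dots+j_{k-i}=d-j$, namely $\binom{d+k-j-i-1}{k-i-1}$, so $\dim S_d^r(\orangeO')=|\mathcal{M}'|=\sum_{j=0}^d\binom{d+k-j-i-1}{k-i-1}\,|\mathcal{M}_j|$, which is \eqref{card}.

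I expect the main obstacle to be the middle step: writing out the general $C^r$ smoothness relation across $\theta*\tau$ in coordinates adapted to the ordered vertex sets of $\sigma$ and $\sigma'$, and verifying cleanly that (i) the $\tau$-block $(m_{v_1},\dots,m_{v_{k-i}})$ of every multi-index is preserved by the relation, and (ii) for a fixed $\tau$-block of size $d-j$ the surviving relation is \emph{literally} the degree-$j$ relation on $\projectedorange_j$, not merely isomorphic to it. Everything hinges on the fact that $O,u_1,\dots,u_i$ and $O,u_1',\dots,u_i'$ span the same $i$-plane $x_{i+1}=\dots=x_k=0$ as $\omega$ and $\omega'$; once that identification of the barycentric data is made precise, the layer decomposition, the description of $\mathcal{M}'$, and the count are immediate.
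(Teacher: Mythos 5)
Your proof is correct and follows essentially the same route as the paper: decompose the domain points of $\orangeO'$ into layers parallel to $\projectedorange$ via Lemma~\ref{detset}, argue that the $C^r$ smoothness conditions decouple layer by layer into exactly the conditions defining $S_j^r(\projectedorange_j)$, and then count the layers indexed by $\mathcal{D}^\tau$. Your barycentric-coordinate argument showing that the $\tau$-block of each multi-index is preserved by every smoothness relation (because the opposite vertex $u_i'$ lies in the subspace $x_{i+1}=\dots=x_k=0$) makes explicit the decoupling step that the paper only asserts.
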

	\begin{proof}
		Lemma~\ref{detset} implies that the domain points in $\mathcal{D}'$ lie on parallel layers that are shifts of $\projectedorange_j$ by a vector $\xi_{j,j_1,j_2,\dots,j_{k-i}}$ that is orthogonal to $\projectedorange_j$. Thus, all smoothness conditions for a spline in $S_d^r(\orangeO')$ are essentially $i$-dimensional, not $k$-dimensional, see Fig.~\ref{twotriangles} and~\ref{twotetra}. In fact, they are exactly the same as the ones for $S_j^r( \projectedorange)$ for all $j=0,\dots,d$. We now assume that we know \bb~bases for $S_j^r( \projectedorange)$ for all $j=0,\dots,d$. If we scale $\projectedorange$, these \bb~bases do not change since they are affine invariant. 
		Thus, we know the corresponding minimal determining sets ${\mathcal{M}}_j$ for a spline in $S_j^r(\projectedorange_j)$, for $j=0,\dots,d$. We assume that if $j=0$, we have one point in the corresponding MDS. 
		Thus, we only need to count the number of $\projectedorange_j$ layers to complete the proof. 
		Since $j+j_1+\cdots+j_{k-i}=d$, for each $0\leq j\leq d$, we have 
		$\binom{d+k-j-i-1}{k-i-1}$ layers of $\projectedorange_j$, and~\eqref{card} follows.
	\end{proof}
	Note that~\eqref{card} is equal to~\eqref{eq:main_result_formula}. Although it is tempting to claim that the minimal determining set $\mathcal{M}'\subseteq \orangeO'$ in Theorem~\ref{mds} gives rise to a corresponding minimal determining set in $\orangeO$, this is, of course, not true in general, since the associated change of basis matrix is not guaranteed to be non-singular. The approximation order of $S_d^r(\orangeO)$ is hard to define since this partition is not easily refinable into oranges. Assuming that we can create macro-elements based on oranges, the only conclusion that can be derived is that the approximation order of $S_d^r(\orangeO)$ cannot be better than that of $S_d^r(\projectedorange)$.
	
	\section*{Acknowledgements}
	N. Villamizar and Beihui Yuan were supported by the UK Engineering and Physical Sciences Research Council (EPSRC) New Investigator Award EP/V012835/1.
	
	
	
	

\end{document}